\newtheorem{thm}{Theorem}[section]
\newtheorem{prop}[thm]{Proposition}
\newtheorem{lemma}[thm]{Lemma}
\newtheorem{question}[thm]{Question}
\theoremstyle{definition}
\newtheorem{defin}[thm]{Definition}
\theoremstyle{remark}
\newtheorem{rmk}[thm]{Remark}
\numberwithin{equation}{section}
\newcommand{\Q}{\mathbb Q}
\newcommand{\F}{\mathbb F}
\newcommand{\C}{\mathbb C}
\newcommand{\Z}{\mathbb Z}
\newcommand{\G}{\mathbb G}
\renewcommand{\c}{\subseteq}
\newcommand{\mc}[1]{\mathcal{#1}}
\newcommand{\cl}{\overline}
\newcommand{\set}[1]{\left\{#1\right\}}
\renewcommand{\phi}{\varphi}
\newcommand{\on}[1]{\operatorname{#1}}
\newcommand{\ang}[1]{\left \langle{#1}\right \rangle}
\newcommand{\ext}{\mbox{$\bigwedge$}}
\title{Torsion classes in the equivariant Chow groups of algebraic tori}
\author{Federico Scavia}
\address{Department of Mathematics\\
	University of British Columbia\\
	Vancouver, BC V6T 1Z2\\Canada}
\email{scavia@math.ubc.ca}
\thanks{The author was partially supported by a graduate fellowship from the University of British Columbia. This research was made possible through funding provided by Mitacs.}
\subjclass[2020]{14C15 (Primary) 20G15, 20J06 (Secondary)}
\keywords{Chow groups, algebraic tori, group cohomology, Steenrod squares.}
\begin{document}
	\begin{abstract}
		We give an example of an algebraic torus $T$ such that the group $\on{CH}^2(BT)_{\on{tors}}$ is non-trivial. This answers a question of Blinstein and Merkurjev.
	\end{abstract}
	
	\maketitle

	\section{Introduction}	
	Let $F$ be a field, and let $G$ be a linear algebraic group over $F$. Let $i\geq 0$ be an integer, let $V$ be a linear representation of $G$ over $F$, and assume that there exists a $G$-invariant open subscheme $U$ of $V$ such that $U$ is the total space of a $G$-torsor $U\to U/G$ and $V\setminus U$ has codimension at least $i+1$ in $V$. Following B. Totaro \cite[Definition 1.2]{totaro1999chow}, we define \[\on{CH}^i(BG):=\on{CH}^i(U/G).\] This definition does not depend on the choice of $V$ and $U$; see \cite[Theorem 1.1]{totaro1999chow}. The graded abelian group $\on{CH}^*(BG):=\oplus_{i\geq 0}\on{CH}^i(BG)$ has the structure of a commutative ring with identity.
	
	If $T$ is a split $F$-torus, and $\hat{T}$ is the character lattice of $T$, then there is a canonical isomorphism $\on{Sym}(\hat{T})\simeq \on{CH}^*(BT)$. Thus, if $T$ has rank $n$, $\on{CH}^*(BT)$ is a polynomial ring with $n$ generators in degree $1$, and in particular its underlying additive group is torsion-free.
	
	When $G$ is a finite group, a lot of work on $\on{CH}^*(BG)$ has been carried out by a number of authors, for example N. Yagita \cite{yagita2012chow}, P. Guillot \cite{guillot2005steenrod} and Totaro. Totaro's book \cite{totaro2014group} is devoted to the study of $\on{CH}^*(BG)$ and to its relation to the group cohomology of $G$. 
	
	When $G$ is a split reductive group, there is an extensive literature dealing with computations of $\on{CH}^*(BG)$.
	For instance, the ring $\on{CH}^*(BG)$ has been computed for $G=\on{GL}_n,\on{SL}_n,\on{Sp}_{2n}$ by Totaro \cite{totaro1999chow}, for $G=\on{O}_n,\on{SO}_{2n+1}$ by Totaro and R. Pandharipande \cite{totaro1999chow} \cite{pandharipande1998equivariant}, for $G=\on{SO}_{2n}$ by R. Field \cite{field2012chow}, for $\on{G}_2$ by N. Yagita \cite{yagita2005applications}, for $\on{PGL}_3$ by G. Vezzosi \cite{vezzosi2000chow}, and for $\on{PGL}_p$ (additively) by A. Vistoli \cite{vistoli2007cohomology}. 
	
	Let $F_s$ be a separable closure of $F$, let $\mc{G}:=\on{Gal}(F_s/F)$ be the absolute Galois group of $F$. If $X$ is an $F$-scheme, we define $X_s:=X\times_FF_s$. When $G$ is not assumed to be split, a lot less is known about $\on{CH}^*(BG)$. Assume that $G=T$ is an $F$-torus, not necessarily split. Then we have canonical isomorphisms \[\on{CH}^1(BT)\simeq \on{CH}^1(BT_s)^{\mc{G}}\simeq (\hat{T_s})^{\mc{G}}.\]
	On the other hand, the natural homomorphism \[\on{CH}^2(BT)\to \on{CH}^2(BT_s)^{\mc{G}}\] is not surjective in general; many examples can be obtained from \cite[Lemma 4.2, Theorem 4.10, Theorem 4.13]{blinstein2013cohomological}.
	
	When $X$ is a smooth variety over $F$, the natural map \[\on{CH}^2(X)\to \on{CH}^2(X_s)^{\mc{G}}\] is in general neither injective nor surjective, that is, Galois descent for codimension $2$ cycles may fail. It is a difficult and interesting problem to study the kernel and cokernel of the previous map, even for special families of varieties $X$. An extensive literature is devoted to it; here we limit ourselves to mention the works of A. Pirutka \cite{pirutka2011groupe}, J.-L. Colliot-Th{\'e}l{\`e}ne \cite{colliot2013cycles}, Colliot-Th{\'e}l{\`e}ne and B. Kahn \cite{colliot2015descente}, and R. Parimala and V. Suresh \cite{parimala2016degree}.

	Since $\on{CH}^2(BT_s)$ is torsion-free, a norm argument shows that \[\on{Ker}(\on{CH}^2(BT)\to \on{CH}^2(BT_s)^{\mc{G}})=\on{CH}^2(BT)_{\on{tors}},\] where $\on{CH}^2(BT)_{\on{tors}}$ is the torsion subgroup of $\on{CH}^2(BT)$. The group $\on{CH}^2(BT)_{\on{tors}}$ plays a prominent role in work of S.~Blinstein and A.~Merkurjev, where it appears as the first term of the exact sequence of \cite[Theorem B]{blinstein2013cohomological}. In \cite[Theorem 4.7]{blinstein2013cohomological}, Blinstein and Merkurjev showed that 
	$\on{CH}^2(BT)_{\on{tors}}$ is finite and $2\cdot\on{CH}^2(BT)_{\on{tors}}=0$. They posed the following question.
	
	\begin{question}\label{question} {\rm (}\cite[Question 4.9]{blinstein2013cohomological}{\rm)}
		Is $\on{CH}^2(BT)_{\on{tors}}$ trivial for every torus $T$?
	\end{question}
	
	Merkurjev studied this question further in~\cite{merkurjev2019pairing}.
	He showed that $\on{CH}^2(BT)_{\on{tors}}=0$ in many cases, for example:
	\begin{itemize}
		\item[--] when $BT$ is $2$-retract rational, by \cite[Corollary 5.5]{merkurjev2019pairing};
		\item[--] when the $2$-Sylow subgroups of the splitting group of $T$ are cyclic or Klein four-groups, by \cite[Proposition 2.1(2), Example 4.3, and Corollary 5.3]{merkurjev2019pairing};
		\item[--] when $\on{char}F=2$, by \cite[Corollary 5.5]{merkurjev2019pairing};
		\item[--] when $T=R_{E/F}(\G_{\on{m}})/\G_{\on{m}}$ and $E/F$ is a finite Galois extension, by \cite[Example 4.2, Corollary 5.3]{merkurjev2019pairing}. 
	\end{itemize}

	The purpose of this paper is to show that \Cref{question} has a negative answer. 
	
	\begin{thm}\label{mainthm}
		There exist a field $F$ and an $F$-torus $T$ such that $\on{CH}^2(BT)_{\on{tors}}$ is not trivial.
	\end{thm}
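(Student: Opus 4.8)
The plan is to reduce the problem to a computation in the cohomology of a finite group, and then to detect a nontrivial torsion class using Steenrod operations. Every $F$-torus $T$ is determined by its character lattice $L := \hat{T_s}$, viewed as a Galois module on which $\mc{G}$ acts through a finite quotient $\Gamma = \on{Gal}(E/F)$, where $E$ is a splitting field. By Artin's theorem, any finite group $\Gamma$ acts faithfully on a suitable field and hence arises as $\on{Gal}(E/F)$ for some $F$; for any $\Gamma$-lattice $L$ there is then a torus $T$ over $F$ realizing $L$. Thus it suffices to exhibit a single pair $(\Gamma, L)$, together with a suitable ground field $F$, for which $\on{CH}^2(BT)_{\on{tors}} \neq 0$. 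Since this group is annihilated by $2$ by \cite[Theorem 4.7]{blinstein2013cohomological}, the entire computation can be carried out with $\F_2$-coefficients.

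First I would translate $\on{CH}^2(BT)_{\on{tors}}$ into group-cohomological terms. Using the descriptions of $\on{CH}^2(BT)$ from \cite{blinstein2013cohomological} and \cite{merkurjev2019pairing}, together with the identifications $\on{CH}^1(BT_s) \simeq L$ and $\on{CH}^2(BT_s) \simeq \on{Sym}^2 L$ and their $\Gamma$-actions, I expect $\on{CH}^2(BT)_{\on{tors}}$ to be governed by a subquotient built from $H^*(\Gamma, L)$ and $H^*(\Gamma, \on{Sym}^2 L)$, measuring the failure of $\Gamma$-invariant quadratic classes to lift. After reducing mod $2$ and comparing with étale cohomology via the cycle map $\on{CH}^2(BT)/2 \to H^4_{\on{et}}(BT; \F_2)$, such a torsion class should become visible inside the mod-$2$ cohomology of $B\Gamma$.

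Guided by Merkurjev's vanishing results, which dispose of the cases in which the $2$-Sylow subgroup of $\Gamma$ is cyclic or a Klein four-group, I would take $\Gamma$ with elementary abelian $2$-Sylow of rank at least $3$, the simplest candidate being $\Gamma = (\Z/2)^3$, together with an explicit $\Gamma$-lattice $L$ (for instance a sublattice of a permutation lattice engineered so that a chosen invariant does not lift). Here $H^*((\Z/2)^3; \F_2) \simeq \F_2[x_1,x_2,x_3]$ is a polynomial ring on degree-one generators, with $\on{Sq}^1 x_i = x_i^2$ and $\on{Sq}^2$ determined by the Cartan formula, so every operation in sight is explicitly computable. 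I would then produce a candidate class in the relevant degree and apply a Steenrod square (concretely $\on{Sq}^2$, or the Bockstein $\on{Sq}^1$) to certify that it does not lift to an integral, non-torsion cycle, hence yields a nonzero element of $\on{CH}^2(BT)_{\on{tors}}$.

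The main obstacle is the second step together with the verification of nontriviality. One must pin down precisely which cohomological object computes $\on{CH}^2(BT)_{\on{tors}}$ --- Chow groups do not satisfy Galois descent in general, and there is no naive Hochschild--Serre spectral sequence --- and then show that the Steenrod-square computation genuinely detects a class surviving in $\on{CH}^2(BT)$, rather than one killed by the map to $\on{CH}^2(BT_s)^{\Gamma}$ or by relations in the cohomology ring. Exhibiting an explicit lattice $L$ for which the candidate class is simultaneously well-defined and nonzero is the crux of the argument.
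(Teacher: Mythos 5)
Your overall strategy---reduce the question to the cohomology of the finite splitting group and detect a torsion class with a Bockstein/Steenrod square---is exactly the strategy of the paper, but as written the proposal has two genuine gaps, one structural and one fatal to the concrete plan. Structurally, the identification of the group-cohomological object that computes $\on{CH}^2(BT)_{\on{tors}}$ is left as an expectation, and you yourself flag it as the main obstacle; it is. The paper does not use $\on{Sym}^2 L$ or a cycle map to \'etale cohomology. It uses Merkurjev's invariant $\Phi(G,L)$, defined as the image of the connecting homomorphism $H^1(G,\ext^2(L''))\to H^2(G,L''/2)$ attached to the extension $0\to L''/2\to \Gamma^2(L'')\to \ext^2(L'')\to 0$, where $L''$ is the kernel of a coflasque resolution of $L$; by \cite[Theorem B]{blinstein2013cohomological} and \cite[Proposition 5.6]{merkurjev2019pairing} this computes $\on{CH}^2(BT)_{\on{tors}}$ exactly in the versal situation $F=\Q(V)^G$ (\Cref{merk-thm}). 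Without this reduction there is no well-defined class to which a Steenrod operation can be applied, and no exhibited lattice on which to test it; producing the lattice $M=\on{Coker}(\Z[G]^{\oplus 2}\to\Z[G\times G])$ and analyzing its coflasque resolution is the content of Sections \ref{sec3}--\ref{sec5}.

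The fatal issue is the proposed choice $\Gamma=(\Z/2)^3$. In $H^*((\Z/2)^3,\Z/2)\simeq\F_2[x_1,x_2,x_3]$ one has $\on{Sq}^1(x_ix_j)=x_i^2x_j+x_ix_j^2$, and each such term lies in $H^1(G,\Z/2)\cdot\on{Im}\bigl(\pi_2\colon H^2(G,\Z)\to H^2(G,\Z/2)\bigr)=H^1\cdot\langle x_1^2,x_2^2,x_3^2\rangle$, which is precisely the part of $H^3(G,\Z/2)$ accounted for by the permutation pieces of a coflasque resolution (the subgroup $V_G$ of \Cref{defin-vg}). So every Bockstein obstruction is absorbed, the criterion of \Cref{condition} is vacuous, and no nontrivial class is detected. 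The paper records that the same happens for every abelian group and every group of order at most $32$; one is forced to a nonabelian group of order $64$, a $2$-Sylow subgroup of $\on{Sz}(8)$, where the class $\on{Sq}^1(w)$ of a non-decomposable degree-$2$ generator escapes $V_G$---and even there the verification (\Cref{sqw}) is not a routine polynomial-algebra computation but requires restricting through a chain of subgroups down to one of order $16$ using Carlson's tables to separate $\on{Sq}^1(w)$ from the corestriction ideal. In short, the proposal correctly guesses the shape of the argument but its concrete instantiation would return the answer zero, and the two hard steps (the precise cohomological reformulation and the construction of a group--lattice pair passing the test) are exactly the ones left open.
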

	
	In our example, the splitting group $G$ of $T$ is a $2$-Sylow subgroup of the Suzuki group $\on{Sz}(8)$, and $F=\Q(V)^G$, where $V$ is a faithful representation of $G$ over $\Q$. The group $G$ has order $64$; no counterexample with a splitting group of smaller order can
	be detected using our method. The torus $T$ has dimension $2^{12}-2^6+1=4033$.
	
	The paper is structured as follows. In \Cref{sec2}, we recall a construction due to Merkurjev \cite{merkurjev2019pairing}, which to every $G$-lattice $L$ associates an abelian group $\Phi(G,L)$. By a result of Merkurjev, to show that \Cref{question} has a negative answer, it suffices to exhibit $G$ and $L$ such that $\Phi(G,L)\neq 0$; see \Cref{merk-thm}. This reduces \Cref{question} to a problem in integral representation theory. In \Cref{sec3}, we associate to every finite group $G$ a $G$-lattice $M$. In Sections \ref{sec4} and \ref{sec5} we show that if the group cohomology of $G$ with $\Z/2$ coefficients satisfies a certain condition, then $\Phi(G,M)\neq 0$; see \Cref{condition}(b). Finally, in \Cref{sec6}, we show that the condition of \Cref{condition}(b) is satisfied when $G$ is a $2$-Sylow subgroup of $\on{Sz}(8)$.
	
	\section{Merkurjev's reformulation of Question 1.1}\label{sec2}
	
	Let $G$ be a finite group, and let $L$ be a $G$-lattice, that is, a finitely generated free $G$-module. By definition, the second exterior power $\ext^2(L)$ of $L$ is the quotient of $L\otimes L$ by the subgroup generated by all elements of the form $x\otimes x$, $x\in L$. We denote by $\Gamma^2(L)$ the factor group of $L\otimes L$ by the subgroup generated by $x\otimes y+y\otimes x$, $x,y\in L$. We write $x\wedge y$ for the coset of $x\otimes y$ in $\ext^2(L)$, and $x\star y$ for the coset of $x\otimes y$ in $\Gamma^2(L)$.
	
	We have a short exact sequence 
	\begin{equation}\label{exterior}
	0\to L/2\xrightarrow{\iota} \Gamma^2(L)\xrightarrow{\pi} \ext^2(L)\to 0,
	\end{equation}
	where $\iota(x+2L)=x\star x$, and $\pi(x\star y)=x\wedge y$.
	We write \[\alpha_L:H^1(G,\ext^2(L))\to H^2(G,L/2)\] for the connecting homomorphism for (\ref{exterior}). Recall that a $G$-lattice is called a permutation lattice if it admits a permutation basis, i.e., a $\Z$-basis stable under the $G$-action. A $G$-lattice $L'$ is said to be stably equivalent to $L$ if there exist permutation $G$-lattices $P$ and $P'$ such that $L\oplus P\simeq L'\oplus P'$.
	
	\begin{lemma}\label{setup}
		\begin{enumerate}[label=(\alph*)]
			\item Assume that $L$ is a  permutation $G$-lattice, and let $x_1,\dots,x_n$ be a permutation basis of $L$. Then the homomorphism \[\Gamma^2(L)\to L/2,\qquad x_i\star x_j\mapsto 0\ (i\neq j), \quad x_i\star x_i\mapsto x_i+2L,\] defines a splitting of (\ref{exterior}). Moreover, the homomorphism
			\[\ext^2(L)\to \Gamma^2(L),\qquad x_i\wedge x_j\mapsto x_i\star x_j\ (i<j)\] is a section of $\pi$.
			\item Let $L'$ be a $G$-lattice stably equivalent to $L$. Then $\on{Im}(\alpha_L)\simeq \on{Im}(\alpha_{L'})$.
		\end{enumerate}
	\end{lemma}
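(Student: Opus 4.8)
For part (a), the plan is to define each map on the tensor square $L\otimes L$ and then check that it descends to the relevant quotient. For the retraction, I would consider the additive map $\tilde{s}\colon L\otimes L\to L/2$ sending $x_i\otimes x_i\mapsto x_i+2L$ and $x_i\otimes x_j\mapsto 0$ for $i\neq j$. It annihilates the generators $x_i\otimes x_j+x_j\otimes x_i$ and $2(x_i\otimes x_i)$ of the subgroup defining $\Gamma^2(L)$ (the latter because $2(x_i+2L)=0$ in $L/2$), so it induces a homomorphism $s\colon\Gamma^2(L)\to L/2$. Since $G$ permutes the $x_i$ without signs, $s$ is $G$-equivariant, and $s\circ\iota=\on{id}$ holds by construction, which gives the splitting. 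For the section, I would use that $\{x_i\wedge x_j:i<j\}$ is a $\Z$-basis of the free group $\ext^2(L)$ to define $t(x_i\wedge x_j):=x_i\star x_j$; the only point to verify is $G$-equivariance, which follows because both $\wedge$ and $\star$ are alternating, so the sign incurred when $\sigma$ reverses the order of a pair is the same on both sides. Then $\pi\circ t=\on{id}$ is immediate.

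For part (b), I would first reduce the statement to how $\on{Im}(\alpha_L)$ interacts with direct sums. Writing $A\oplus B$ for a decomposition of a $G$-lattice, the canonical $G$-equivariant isomorphisms $\Gamma^2(A\oplus B)\simeq\Gamma^2(A)\oplus(A\otimes B)\oplus\Gamma^2(B)$ and $\ext^2(A\oplus B)\simeq\ext^2(A)\oplus(A\otimes B)\oplus\ext^2(B)$ exhibit the sequence (\ref{exterior}) for $A\oplus B$ as the direct sum of the sequences (\ref{exterior}) for $A$ and for $B$ together with the split exact sequence $0\to 0\to A\otimes B\xrightarrow{=}A\otimes B\to 0$. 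Since the connecting homomorphism is additive over direct sums of short exact sequences and vanishes on the last summand (whose left-hand term is zero), this shows that the image of $\alpha_{A\oplus B}$ equals $\on{Im}(\alpha_A)\oplus\on{Im}(\alpha_B)$.

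Next I would invoke part (a): if $P$ is a permutation lattice, then (\ref{exterior}) splits $G$-equivariantly, so $\alpha_P=0$, and hence $\on{Im}(\alpha_{A\oplus P})=\on{Im}(\alpha_A)$ for every $G$-lattice $A$. Finally, any $G$-isomorphism of lattices induces an isomorphism of the corresponding sequences (\ref{exterior}), and therefore of the images of their connecting maps. Given permutation lattices $P,P'$ with $L\oplus P\simeq L'\oplus P'$, I would combine these observations to obtain
\[\on{Im}(\alpha_L)\simeq\on{Im}(\alpha_{L\oplus P})\simeq\on{Im}(\alpha_{L'\oplus P'})\simeq\on{Im}(\alpha_{L'}).\]
The step requiring the most care is the direct-sum decomposition of (\ref{exterior}): one must check that the identifications of $\Gamma^2$ and $\ext^2$ with their three summands are $G$-equivariant and strictly compatible with $\iota$ and $\pi$, so that the resulting decomposition of the connecting homomorphism is legitimate. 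Everything else is then formal.
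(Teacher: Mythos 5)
Your proof is correct and follows essentially the same route as the paper's: the paper simply cites \cite[\S 2]{merkurjev2019pairing} for this lemma, but the intended argument (sketched in the source) is exactly yours --- define the retraction and section on $L\otimes L$ and on the basis $\{x_i\wedge x_j\}_{i<j}$ respectively, and for (b) decompose the sequence (\ref{exterior}) for $L\oplus P$ as the direct sum of the sequences for $L$, for $P$, and the split piece $0\to 0\to L\otimes P\to L\otimes P\to 0$, so that $\alpha_{L\oplus P}=(\alpha_L,\alpha_P,0)$ with $\alpha_P=0$ by part (a). No gaps.
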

	\begin{proof}
		This is contained in  \cite[\S 2]{merkurjev2019pairing}.	
	\end{proof}
	
	\begin{rmk}\label{ordering}
		In \Cref{setup}(a), the homomorphism $\Gamma^2(L)\to L/2$ is clearly independent of the ordering of the $x_i$. Since $x\wedge y=-y\wedge x$ and $x\star y=-y\star x$ for every $x,y\in L$, the homomorphism $\ext^2(L)\to \Gamma^2(L)$ sends $x_i\wedge x_j$ to $x_i\star x_j$ for every $i\neq j$. In particular, the homomorphism $\ext^2(L)\to \Gamma^2(L)$ is also independent of the choice of the ordering of the $x_i$. 
	\end{rmk}
	
	Recall that a coflasque resolution of $L$ is a short exact sequence of $G$-lattices \begin{equation}\label{l'} 0\to L''\to L'\to L\to 0\end{equation} such that $L'$ is a permutation $G$-lattice and $L''$ is coflasque, i.e., $H^1(H,L'')=0$ for every subgroup $H$ of $G$. By \cite[Lemme 3]{colliot1977r}, coflasque resolutions always exist. Let (\ref{l'}) be a coflasque resolution of $L$, and define
	\[\Phi(G,L):=\on{Im}(\alpha_{L''}).\] By \Cref{setup}(b) and \cite[Lemme 5]{colliot1977r}, this definition does not depend on the choice of a coflasque resolution of $L$.
	
	The following is a reformulation of \Cref{question} purely in terms of integral representation theory, and is the starting point for the present work. It is due to Merkurjev \cite{merkurjev2019pairing}, and builds upon the results of Blinstein-Merkurjev \cite{blinstein2013cohomological}.
	
	\begin{thm}\label{merk-thm}
		Let $F$ be a field, let $T$ be an $F$-torus with character lattice $\hat{T}$, minimal splitting field $E$ and splitting group $G=\on{Gal}(E/F)$.
		\begin{enumerate}[label=(\alph*)]
			\item The group $\on{CH}^2(BT)_{\on{tors}}$ is a quotient of $\Phi(G,\hat{T})$. 
			\item Let $V$ be a faithful representation of $G$ over $\Q$, and assume that $E=\Q(V)$ and $F=\Q(V)^G$. Then $\Phi(G,\hat{T})\simeq \on{CH}^2(BT)_{\on{tors}}$.
		\end{enumerate}
	\end{thm}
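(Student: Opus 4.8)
The plan is to compute the kernel of the descent map $\on{CH}^2(BT)\to \on{CH}^2(BT_s)^{\mc G}$, which by the norm argument recalled in the introduction equals $\on{CH}^2(BT)_{\on{tors}}$, and to match it with the image of the connecting homomorphism $\alpha$ of \eqref{exterior}. The basic tool is a coflasque resolution $0\to L''\to L'\to L\to 0$ of $L=\hat T$: dualizing gives an exact sequence of $F$-tori $1\to T\to P\to Q\to 1$, where $P$ is quasi-split (because $L'=\hat P$ is a permutation lattice) and $Q$ is coflasque (because $L''=\hat Q$ is). By construction $\Phi(G,\hat T)=\on{Im}(\alpha_{\hat Q})$, and by \Cref{setup}(b) together with \cite[Lemme 5]{colliot1977r} this is independent of the chosen resolution, so it suffices to produce a surjection $\on{Im}(\alpha_{\hat Q})\twoheadrightarrow \on{CH}^2(BT)_{\on{tors}}$ that becomes an isomorphism under the hypotheses of (b).

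First I would exploit that $P$, being quasi-split, is a special group: every $P$-torsor is trivial, $BP$ is rational, and hence $\on{CH}^2(BP)$ is torsion-free and satisfies descent, $\on{CH}^2(BP)\simeq \on{CH}^2(BP_s)^{\mc G}$. Comparing the exact sequences of \cite[Theorem B]{blinstein2013cohomological} (in which $\on{CH}^2(\,\cdot\,)_{\on{tors}}$ appears as the first term) for the three tori in $1\to T\to P\to Q\to 1$, the vanishing contribution of $P$ should leave a comparison between $\on{CH}^2(BT)_{\on{tors}}$ and a group-cohomological invariant of the coflasque torus $Q$. This is where the coflasque hypothesis does its work: $H^1(H,\hat Q)=0$ for every subgroup $H\le G$, which kills the auxiliary terms that would otherwise obstruct a clean identification.

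Next I would identify this invariant of $Q$ with $\on{Im}(\alpha_{\hat Q})$. Approximating $BQ$ by a smooth variety $X=U/Q$ and running the Bloch--Ogus/Hochschild--Serre descent machinery for codimension-two cycles, one uses $\on{CH}^1(BQ_s)\simeq \hat Q_s$ and $\on{CH}^2(BQ_s)\simeq \on{Sym}^2(\hat Q_s)$. Since Blinstein--Merkurjev have shown that $\on{CH}^2(BQ)_{\on{tors}}$ is finite and killed by $2$, all of the relevant data is $2$-primary, and I expect the descent differentials to be governed precisely by the mod-$2$ extension \eqref{exterior}, namely $0\to \hat Q_s/2\to \Gamma^2(\hat Q_s)\to \ext^2(\hat Q_s)\to 0$. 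Tracing the boundary maps should identify the kernel above with $\on{Im}(\alpha_{\hat Q})$, a priori only up to a quotient, which yields the surjection of part (a).

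Finally, for (b) the task is to rule out this quotient, i.e.\ to show the surjection is injective when $E=\Q(V)$ and $F=\Q(V)^G$. Here I would invoke versality of the generic $G$-extension $\Q(V)/\Q(V)^G$: for a versal $G$-torsor the comparison map from the group cohomology of $G$ to the Galois cohomology over $F$ is injective on normalized classes, so none of the classes in $\on{Im}(\alpha_{\hat Q})$ can become trivial in $\on{CH}^2(BT)_{\on{tors}}$, forcing the kernel of the surjection from (a) to vanish. The main obstacle, in my view, is the bookkeeping of the third step: correctly matching the $G$-module $\on{Sym}^2(\hat Q_s)$ and the descent differentials with the connecting map $\alpha_{\hat Q}$, and checking that the coflasque and special hypotheses make every auxiliary term vanish; the versality input for (b) is comparatively standard.
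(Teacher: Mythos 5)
Your outline reproduces, in broad strokes, the strategy of the works that the paper actually cites, but as written it has a genuine gap at its center: the identification of the torsion invariant of the coflasque torus $Q$ with $\on{Im}(\alpha_{\hat Q})$ is asserted (``I expect the descent differentials to be governed precisely by the mod-$2$ extension \eqref{exterior}'') rather than proved, and that identification is exactly the nontrivial content of the statement. The paper does not attempt to reprove it: part (a) is quoted from \cite[Corollary 5.2]{merkurjev2019pairing}, and part (b) is obtained by identifying both sides with the left kernel of the pairing \eqref{pairing} of $H^1(K,T^{\circ})$ with $H^1(K,T)$ into $H^3(K,\Q/\Z(2))$ --- the left kernel equals $\on{CH}^2(BT)_{\on{tors}}$ by \cite[Theorem B]{blinstein2013cohomological} for arbitrary $F$, and equals $\Phi(G,\hat T)$ by \cite[Proposition 5.6]{merkurjev2019pairing} under the genericity hypothesis of (b). Note in particular that the intermediary in the paper is not the Galois descent map on $\on{CH}^2$ but the group of normalized degree-$3$ invariants of $T$; you mention Theorem B only in passing and instead propose to run Bloch--Ogus/Hochschild--Serre directly on $\on{CH}^2(BQ)$, which is a different and, in your write-up, unverified route.

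Concretely, the steps you defer as ``bookkeeping'' are where all the work lies. First, you must reconcile $\on{CH}^2(BQ_s)\simeq\on{Sym}^2(\hat Q_s)$ with the fact that $\alpha_{\hat Q}$ is built from $\Gamma^2(\hat Q)$ and $\ext^2(\hat Q)$; the passage from the symmetric square to the extension \eqref{exterior}, using the $2$-torsion statement of \cite[Theorem 4.7]{blinstein2013cohomological}, is precisely Merkurjev's computation and is not a formal consequence of the setup. Second, for (b), ``injectivity on normalized classes for a versal torsor'' is the right slogan but not yet an argument: a class in $\Phi(G,\hat T)$ could a priori pair trivially with every $\beta\in H^1(K,T)$ for every extension $K/F$ without being zero, and ruling this out when $F=\Q(V)^G$ is again the content of \cite[Proposition 5.6]{merkurjev2019pairing}. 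Your proposal is a reasonable reading guide to \cite{blinstein2013cohomological} and \cite[Sections 3--5]{merkurjev2019pairing}, but to turn it into a proof you would essentially have to reprove those results, whereas the paper simply invokes them.
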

	\begin{proof}
		(a) This is \cite[Corollary 5.2]{merkurjev2019pairing}.
		
		(b) Let $T^{\circ}$ be the dual torus of $T$. For every field extension $K/F$, there is a pairing
		\begin{equation}\label{pairing}H^1(K,T^{\circ})\times H^1(K,T)\xrightarrow{\cup} H^3(K,\Q/\Z(2)),\qquad (\alpha,\beta)\mapsto \alpha\cup\beta;\end{equation} see \cite[(4-5)]{blinstein2013cohomological}.
		By definition, the left kernel of (\ref{pairing}) is the subgroup of all $\alpha\in H^1(F,T^{\circ})$ such that for every field extension $K/F$ and every $\beta\in H^1(K,T)$ we have $\alpha_K\cup\beta=0$.  
		
		By \cite[Proposition 5.6]{merkurjev2019pairing}, the left kernel of the pairing (\ref{pairing}) is isomorphic to $\Phi(G,\hat{T})$. By \cite[Theorem B]{blinstein2013cohomological}, the left kernel of (\ref{pairing}) is isomorphic to $\on{CH}^2(BT)_{\on{tors}}$.
	\end{proof}

	\section{The example}\label{sec3}
	Let $G$ be a finite group. In this section we define a $G$-lattice $M$. In \Cref{condition}, we will show that $\Phi(G,M)\neq 0$ when the group cohomology of $G$ with $\Z/2$ coefficients satisfies a certain condition.
	
	The $G$-lattice $\Z[G\times G]$ is a free $\Z[G]$-module, and has a canonical permutation basis $\set{(g,g'):g,g'\in G}$. The free $G$-lattice $\Z[G]\oplus \Z[G]$ has a canonical basis $\set{(g,0),(0,g):g\in G}$. Let $\gamma:=\sum_{g\in G} g \Z[G]$. Consider the homomorphism \[\rho:\Z[G]^{\oplus 2}\to \Z[G\times G],\qquad (g,0)\mapsto \sum_{g'\in G}(g,g'),\quad (0,g)\mapsto \sum_{g'\in G}(g',g).\] Then $\on{Ker}\rho=\ang{(\gamma,-\gamma)}$ has rank $1$ and trivial $G$-action. Define \[M:=\on{Coker}\rho,\] and let $\pi: \Z[G\times G]\to M$ be the natural projection. 
	
	\begin{lemma}
		The $G$-module $M$ is $\Z$-free.
	\end{lemma}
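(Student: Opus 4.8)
The plan is to show that $M$, being a finitely generated abelian group, is $\Z$-free by proving that it is torsion-free; equivalently, that $\on{Im}\rho$ is a saturated (pure) subgroup of $\Z[G\times G]$.

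First I would reinterpret $\Z[G\times G]$ as the free abelian group on the grid $G\times G$, thinking of an element $\sum_{g,h} v_{g,h}\,(g,h)$ as a ``matrix'' $(v_{g,h})_{g,h\in G}$. Under $\rho$, the basis element $(g,0)$ maps to the $g$-th row sum and $(0,g)$ to the $g$-th column sum, so that for integers $a_g,b_h$ the image $\rho\bigl(\sum_g a_g(g,0)+\sum_h b_h(0,h)\bigr)$ has $(g,h)$-coefficient equal to $a_g+b_h$. Hence
\[\on{Im}\rho=\set{\textstyle\sum_{g,h}(a_g+b_h)(g,h)\ :\ a_g,b_h\in\Z},\]
the subgroup of matrices that split as a row-constant matrix plus a column-constant matrix. (The non-injectivity of $\rho$, recorded by $\on{Ker}\rho=\ang{(\gamma,-\gamma)}$, does not affect this description of the image.)

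Next I would verify saturation directly. Suppose $m\cdot v\in\on{Im}\rho$ for some integer $m\geq 1$ and some $v=(v_{g,h})$, say $m\,v_{g,h}=a_g+b_h$ for all $g,h\in G$. Fixing base points $g_0,h_0\in G$ and forming the discrete second difference gives
\[m\,(v_{g,h}-v_{g,h_0}-v_{g_0,h}+v_{g_0,h_0})=(a_g+b_h)-(a_g+b_{h_0})-(a_{g_0}+b_h)+(a_{g_0}+b_{h_0})=0.\]
Since $\Z$ is torsion-free and $m\neq 0$, this forces $v_{g,h}=v_{g,h_0}+v_{g_0,h}-v_{g_0,h_0}$, which is of the form $a'_g+b'_h$ with $a'_g:=v_{g,h_0}-v_{g_0,h_0}$ and $b'_h:=v_{g_0,h}$. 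Therefore $v\in\on{Im}\rho$, so $\on{Im}\rho$ is saturated.

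Consequently $M=\Z[G\times G]/\on{Im}\rho$ is torsion-free, and being finitely generated it is $\Z$-free (of rank $\lvert G\rvert^2-(2\lvert G\rvert-1)=(\lvert G\rvert-1)^2$, consistent with the second-difference map onto $\Z^{(\lvert G\rvert-1)^2}$). I do not expect a genuine obstacle in this argument: the only point requiring care is the explicit description of $\on{Im}\rho$ as the set of ``additively separable'' matrices, after which everything reduces to the standard fact that the lattice of integer matrices modulo its row and column sums is free.
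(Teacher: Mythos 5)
Your proof is correct and takes essentially the same route as the paper: both characterize $\on{Im}\rho$ as the subgroup of ``additively separable'' elements, those with coefficients of the form $a_g+b_h$, and then show this subgroup is saturated in $\Z[G\times G]$, so that $M$ is torsion-free and hence free. The only difference is the concluding computation --- you use the vanishing of the discrete second difference, while the paper adjusts the integers $p_g,q_{g'}$ by their remainders modulo $n$ --- and both work.
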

	
	\begin{proof}
		Let \[x=\sum_{g,g'\in G}a_{g,g'}(g,g')\in \Z[G\times G].\] It is easy to show that $x\in \on{Im}\rho$ if and only if there exist integers $b_g,c_g$ (where $g\in G$) such that $a_{g,g'}=b_g+c_{g'}$ for every $g,g'\in G$.
		
		Assume that $n\pi(x)=0$ for some $n\geq 1$. This means that $nx\in \on{Im}\rho$. Thus, for every $g,g'\in G$, we can find integers $p_g,q_{g'}$ such that $na_{g,g'}=p_g+q_{g'}$. We may write $p_g=nb_g+p'_g$, $q_{g'}=nc_{g'}-q'_{g'}$ where $0\leq p'_g<n$ and $0\leq q'_{g'}<n$ for all $g,g'\in G$. Then $n(a_{g,g'}-b_g-c_{g'})=p'_g-q'_{g'}$. In particular, $n$ divides $p'_g-q'_{g'}$. On the other hand, we have $|p'_g-q'_{g'}|<n$, hence $p'_g=q'_{g'}$. We conclude that $a_{g,g'}=b_g+c_{g'}$ for every $g,g'\in G$, hence $x\in \on{Im}\rho$. This shows that $M$ is torsion-free, as desired.
	\end{proof}
	
	By construction, we have a long exact sequence \begin{equation}\label{cofl0}0\to \Z\xrightarrow{i} \Z[G]^{\oplus 2}\xrightarrow{\rho} \Z[G\times G]\xrightarrow{\pi} M\to 0,\end{equation} where $i(1):=(\gamma,-\gamma)$. We set $N:=\on{Im}\rho$, and we split (\ref{cofl0}) into the two short exact sequences \begin{equation}\label{cofl0a}0\to N\to \Z[G\times G]\xrightarrow{\pi} M\to 0\end{equation} and \begin{equation}\label{cofl0b}0\to \Z\xrightarrow{i} \Z[G]^{\oplus 2}\to N\to 0.\end{equation}
	Let \begin{equation}\label{cofl-p}0\to R\to P\xrightarrow{\pi'} M\to 0\end{equation} be a coflasque resolution of $M$. We define a $G$-lattice $Q$ and homomorphisms $\iota_1:Q\to \Z[G\times G]$ and $\iota_2:Q\to P$ by the short exact sequence 
	\begin{equation}\label{cofl2}0\to Q\xrightarrow{(\iota_1,-\iota_2)} \Z[G\times G]\oplus P\xrightarrow{\pi+\pi'}M\to 0.
	\end{equation} 
	Thus we have a commutative diagram
	\[
	\begin{tikzcd}
	&& 0 & 0\\
	0 \arrow[r] & N \arrow[r] & \Z[G\times G] \arrow[r,"\pi"]\arrow[u] & M \arrow[r]\arrow[u] & 0  \\
	0 \arrow[r]   & N \arrow[r] \arrow[u,equal] & Q \arrow[r,"\iota_2"] \arrow[u,"\iota_1"] & P \arrow[r]\arrow[u,"\pi'"] & 0  \\
	&& R\arrow[u] \arrow[r,equal] & R\arrow[u]\\
	&& 0 \arrow[u] & 0\arrow[u] 
	\end{tikzcd}
	\]
	where the rows and columns are exact, and the top right square is a pullback square. For every subgroup $H$ of $G$ we have $H^1(H,\Z[G\times G])=H^1(H,R)=0$, hence from the diagram $H^1(H,Q)=0$, that is, $Q$ is coflasque. It follows that (\ref{cofl2}) is a coflasque resolution of $M$.

	\section{The map $\alpha_N$}\label{sec4}
	We maintain the notation of the previous section. The purpose of the next two sections is the proof of \Cref{condition}, which gives a sufficient condition for $\Phi(G,M)\neq 0$. In order to prove \Cref{condition}, we must understand the map $\alpha_Q$. In this section we study $\alpha_N$, and in the next section we use the acquired knowledge to derive information on $\alpha_Q$. 
	
	\begin{lemma}\label{exterior2seq}
		Let \[0\to \Z \xrightarrow{\phi} X\xrightarrow{\psi} Y\to 0\] be a short exact sequence of $G$-lattices, and let $\sigma:=\phi(1)$. Then we have a short exact sequence of $G$-lattices \[0\to Y\xrightarrow{\eta} \ext^2(X)\xrightarrow{\wedge^2\psi}  \ext^2(Y)\to 0.\] Here, for every $y\in Y$, we set $\eta(y):=x\wedge \sigma$, where $x\in X$ is any element satisfying $\psi(x)=y$.
	\end{lemma}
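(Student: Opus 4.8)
The plan is to verify that the two displayed maps are well-defined $G$-equivariant homomorphisms of $G$-lattices, and then to establish exactness after forgetting the $G$-action by passing to a convenient $\Z$-basis of $X$. The map $\wedge^2\psi$ presents no difficulty: it is simply the functorial map induced by $\psi$ on second exterior powers, sending $x_1\wedge x_2\mapsto \psi(x_1)\wedge\psi(x_2)$, and it is $G$-equivariant because $\psi$ is. So the real content is to pin down $\eta$ and then to match its image against the kernel of $\wedge^2\psi$.

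For $\eta$, first I would check independence of the choice of lift: if $\psi(x)=\psi(x')=y$, then $x-x'\in\on{Ker}\psi=\phi(\Z)=\Z\sigma$, say $x-x'=n\sigma$, whence $x\wedge\sigma-x'\wedge\sigma=n(\sigma\wedge\sigma)=0$. Thus $\eta$ is well-defined, and it is plainly additive. For $G$-equivariance I would use that $\Z$ carries the trivial action, so that $g\sigma=g\phi(1)=\phi(1)=\sigma$ for all $g\in G$; then for a lift $x$ of $y$, the element $gx$ is a lift of $gy$, and $g\cdot\eta(y)=(gx)\wedge(g\sigma)=(gx)\wedge\sigma=\eta(gy)$.

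To prove exactness I would exploit that $Y$ is a lattice, hence free, so $\psi$ admits a $\Z$-linear (not necessarily $G$-equivariant) section $s\colon Y\to X$. Fixing a $\Z$-basis $y_1,\dots,y_m$ of $Y$, the elements $\sigma,s(y_1),\dots,s(y_m)$ form a $\Z$-basis of $X$, so $\ext^2(X)$ is free with basis $\set{s(y_i)\wedge s(y_j):i<j}\cup\set{\sigma\wedge s(y_i):1\leq i\leq m}$. Since $\psi(\sigma)=0$ and $\psi(s(y_i))=y_i$, the map $\wedge^2\psi$ sends $s(y_i)\wedge s(y_j)\mapsto y_i\wedge y_j$ and $\sigma\wedge s(y_i)\mapsto 0$; hence $\wedge^2\psi$ is surjective with kernel the free summand spanned by $\set{\sigma\wedge s(y_i)}$. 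On the other hand $\eta(y_i)=s(y_i)\wedge\sigma=-\sigma\wedge s(y_i)$, so $\eta$ is injective (its images are, up to sign, part of a basis) and $\on{Im}(\eta)$ is exactly this kernel. Conceptually this is just the decomposition $\ext^2(\Z\oplus Y)\simeq(\Z\otimes Y)\oplus\ext^2(Y)$ with $\ext^2(\Z)=0$, made equivariant.

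The argument is entirely formal, so I do not anticipate a genuine obstacle; the only point requiring care is that the section $s$ is merely $\Z$-linear, which is why I would separate the verification of $G$-equivariance (carried out intrinsically, using the triviality of the $G$-action on $\Z$) from the verification of exactness (carried out after the non-equivariant choice of basis). Since all three terms $Y$, $\ext^2(X)$ and $\ext^2(Y)$ are free $\Z$-modules, the resulting short exact sequence is indeed one of $G$-lattices.
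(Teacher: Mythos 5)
Your proposal is correct and follows essentially the same route as the paper: both verify well-definedness of $\eta$ the same way and then check exactness by completing $\sigma$ to a $\Z$-basis of $X$ (your choice of a $\Z$-linear section $s$ of $\psi$ is the same device) and reading off kernel and image on the induced basis of $\ext^2(X)$. Your explicit verification of $G$-equivariance via the triviality of the action on $\Z$ is a point the paper leaves as ``clear,'' but the argument is the same.
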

	
	\begin{proof}
		We first show that $\eta$ is well defined. Let $y\in Y$, and let $x,x'\in X$ be such that $\psi(x)=\psi(x')=y$. Then $\psi(x-x')=0$, hence $x-x'=m\sigma$ for some integer $m$. But then \[x\wedge\sigma=(x'+m\sigma)\wedge\sigma=x'\wedge\sigma+ m\sigma\wedge\sigma=x'\wedge\sigma.\]
		It is clear that $\eta$ is a homomorphism of $G$-lattices, that $\wedge^2\psi$ is surjective, and that $(\wedge^2\psi) \circ \eta=0$. 
		
		Since $Y$ is torsion-free, we may complete $\sigma$ to a $\Z$-basis $x_1,\dots,x_n,\sigma$ of $X$. Then $\psi(x_1),\dots, \psi(x_n)$ is a basis of $Y$. An arbitrary element $z$ of $\ext^2(X)$ may be uniquely written as \[z=\sum_{1\leq i<j\leq n} a_{ij}x_i\wedge x_j+\sum_{i=1}^nb_ix_i\wedge \sigma\] for suitable integers $a_{ij},b_i$. In particular, $x_1\wedge\sigma,\dots,x_n\wedge\sigma$ are linearly independent. Since $\eta(\psi(x_i))=x_i\wedge \sigma$ for every $i$, we deduce that $\eta$ is injective. Moreover, the free $\Z$-module $\ext^2 (Y)$ has a basis consisting of $\psi(x_i)\wedge\psi(x_j)$, for $0\leq i<j\leq n$. We conclude that $z$ belongs to $\on{Ker}\wedge^2\psi$ if and only if $a_{ij}=0$ for every $i,j$, that is, if and only if $z$ belongs to $\on{Im}\eta$. 
	\end{proof}
	
	Let $C_2=\ang{\tau}$ be the cyclic group of order $2$, and let $\Z^-$ be the $C_2$-lattice of rank $1$ on which $\tau$ acts by $-\on{Id}$. If $g\in G$ is an element of order $2$, we denote by $\on{Ind}_{\ang{g}}^G(\Z^{-})$ the $G$-lattice induced by the $\ang{g}$-lattice $\Z^{-}$.
	
	\begin{lemma}\label{describe-n}
		Let $G$ be a finite group of order $n$, let $g_1,\dots,g_n$ be the elements of $G$, and let $\gamma:=\sum_{i=1}^n g_i\in \Z[G]$. If $m\in \Z[G]^{\oplus 2}$, we let $\cl{m}\in N$ be its image under the homomorphism $\Z[G]^{\oplus 2}\to N$ of (\ref{cofl0b}).
		\begin{enumerate}[label=(\alph*)]
			\item We have a short exact sequence
			\begin{equation}\label{n}0\to N \xrightarrow{\eta} \ext^2 (\Z[G]^{\oplus 2})\to \ext^2 N\to 0,\end{equation} where for every $m\in \Z[G]^{\oplus 2}$ we have $\eta(\cl{m})=m\wedge (\gamma,-\gamma)$.	
			\item We have \[\ext^2 (\Z[G])\simeq \Z[G]^{\oplus r}\oplus \bigoplus_{g^2=e,g\neq e}\on{Ind}_{\ang{g}}^G(\Z^{-})\] for some $r\geq 0$.
			\item For every $i\geq 1$, we have \[H^i(G,\ext^2 (\Z[G]^{\oplus 2}))\simeq \bigoplus_{g^2=e,g\neq e} H^{i+1}(\ang{g},\Z)^{\oplus 2}.\]
			\item The coboundary $\partial:H^1(G,\ext^2 N)\to H^2(G,N)$ associated to (\ref{n}) is surjective.
		\end{enumerate}
	\end{lemma}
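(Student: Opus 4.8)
The plan is to treat the four parts in order, using earlier results as black boxes. For part (a) I would simply invoke \Cref{exterior2seq}, applied to the short exact sequence (\ref{cofl0b}), namely $0\to\Z\xrightarrow{i}\Z[G]^{\oplus 2}\to N\to 0$, whose distinguished element is $\sigma=i(1)=(\gamma,-\gamma)$. The sequence produced by that lemma is exactly (\ref{n}), and the formula $\eta(\cl m)=m\wedge(\gamma,-\gamma)$ is precisely the description of $\eta$ recorded there, so nothing further is needed.

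For part (b) I would analyze the $G$-set $\binom{G}{2}$ of two-element subsets of $G$ under left multiplication, since $\ext^2(\Z[G])$ has $\Z$-basis $\set{g\wedge h:\{g,h\}\in\binom{G}{2}}$ and $G$ permutes these basis vectors up to sign. First I would note that the stabilizer of a pair $\{a,b\}$ has order at most $2$: a nontrivial element fixing the set must swap $a$ and $b$, and such an element is forced to equal $ba^{-1}$, which is then an involution. A free orbit contributes a copy of $\Z[G]$, hence the summand $\Z[G]^{\oplus r}$; an orbit with stabilizer $\ang{g}$ of order $2$ contributes $\on{Ind}_{\ang g}^G(\Z^-)$, because $g$ sends $a\wedge b$ to $b\wedge a=-(a\wedge b)$. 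The one delicate point is the indexing: translating a pair by $a^{-1}$ shows that every orbit with nontrivial stabilizer contains a representative $\set{e,g}$ with $g^2=e$, $g\neq e$, and a short check shows $\set{e,g}$ and $\set{e,g'}$ lie in the same orbit only when $g=g'$. This yields a bijection between such orbits and the involutions of $G$, giving exactly the claimed sum over all $g$ with $g^2=e$, $g\neq e$.

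For part (c) the plan is to start from $\ext^2(A\oplus B)\simeq\ext^2A\oplus(A\otimes B)\oplus\ext^2 B$ to obtain
\[\ext^2(\Z[G]^{\oplus 2})\simeq\ext^2(\Z[G])\oplus\ext^2(\Z[G])\oplus(\Z[G]\otimes\Z[G]).\]
The cross term $\Z[G]\otimes\Z[G]$ is a free $\Z[G]$-module, so it contributes nothing in positive degrees; by part (b) the free summands of $\ext^2(\Z[G])$ also drop out, and Shapiro's lemma gives $H^i(G,\on{Ind}_{\ang g}^G(\Z^-))\simeq H^i(\ang g,\Z^-)$. Finally I would dimension-shift via the sequence of $\ang g$-modules $0\to\Z\xrightarrow{1+g}\Z[\ang g]\to\Z^-\to 0$: since $\Z[\ang g]$ is free, its long exact sequence gives $H^i(\ang g,\Z^-)\simeq H^{i+1}(\ang g,\Z)$ for all $i\geq 1$, and summing over involutions and over the two copies produces the stated formula.

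Part (d), although it asks for surjectivity of a connecting map and therefore looks like it ought to be the hard step, will in fact fall out of part (c) with no extra work, and this is the feature I would emphasize rather than an obstacle. Taking $i=2$ in (c) gives $H^2(G,\ext^2(\Z[G]^{\oplus 2}))\simeq\bigoplus_{g}H^3(\ang g,\Z)^{\oplus 2}$; each $\ang g$ is cyclic of order $2$, and the odd-degree cohomology of a finite cyclic group with trivial coefficients vanishes, so $H^3(\ang g,\Z)=0$ and the whole target is zero. The long exact sequence of (\ref{n}) then forces $\partial$ to be surjective, since its cokernel injects into $H^2(G,\ext^2(\Z[G]^{\oplus 2}))=0$. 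Thus the genuinely substantive step is the orbit bookkeeping of part (b); once the module decomposition is in place, parts (c) and (d) are formal, and the only arithmetic one must get right is the vanishing of $H^3$ of a group of order two.
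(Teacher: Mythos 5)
Your proposal is correct and follows essentially the same route as the paper: part (a) by invoking \Cref{exterior2seq} applied to (\ref{cofl0b}), part (b) by decomposing $\ext^2(\Z[G])$ along the $G$-orbits of unordered pairs (the paper's sublattices $M_g$ are precisely the spans of these orbits, verified there by a generation-plus-rank count rather than a stabilizer computation), and parts (c) and (d) by the same dimension shift along $0\to\Z\to\Z[C_2]\to\Z^-\to 0$ and the vanishing of $H^3(C_2,\Z)$.
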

	
	\begin{proof}
		(a) This follows from \Cref{exterior2seq}, applied to (\ref{cofl0b}). 
		
		(b) Recall that $\set{g_i\wedge g_j:i<j}$ is a basis of $\ext^2(\Z[G])$. It follows that a subset of $\set{g\wedge h:g,h\in G, g\neq h}$ is linearly dependent if and only if it contains a subset of the form $\set{g\wedge h, h\wedge g}$ for some distinct elements $g,h$ of $G$.
		
		Let $e\in G$ be the identity element. We may write \[G=\set{e}\amalg S_1\amalg S_2\amalg S_2^{-1},\] where $S_1=\set{g\in G\setminus\set{e}: g^2=e}$, and for every $g\in G$ such that $g^2\neq e$, exactly one of $g,g^{-1}$ belongs to $S_2$. For every $g\in G\setminus\set{e}$, let $M_g$ be the $G$-sublattice of $\ext^2(\Z[G])$  generated by $g\wedge e$, that is, the $\Z$-submodule generated by $\set{hg\wedge h:h\in G}$. 
		
		Assume first that $g\in S_2$. We have a $G$-homomorphism $f_g:\Z[G]\to M_g$ given by sending $e\mapsto g\wedge e$. The homomorphism $f_g$ is surjective because $M_g$ is generated by $g\wedge e$ as a $G$-lattice. The $G$-orbit of $g\wedge e$ has $n$ elements, and since $g^2\neq e$, it does not contain $e\wedge g$, and so it is a linear independent set of $n$ elements. It follows that $M_g\simeq \Z[G]$ if $g\in S_2$.
		
		Assume now that $g\in S_1$ (so $n$ is even). Then \[g(g\wedge e)=g^2\wedge g=e\wedge g=-g\wedge e.\] Let $\on{Res}^G_{\ang{g}}(M_g)$ be the restriction of $M_g$ to $\ang{g}$. The previous calculation shows that sending $1\in \Z^{-}$ to $g\wedge e$ gives a well-defined homomorphism of $\ang{g}$-modules $\Z^{-}\to \on{Res}^G_{\ang{g}}(M_g)$. By definition, we have the identifications $\on{Ind}_{\ang{g}}^G(\Z^{-})\simeq \Z^-\otimes_{\Z[\ang{g}]}\Z[G]$ and $\on{Res}_{\ang{g}}^G(M_g)\simeq \on{Hom}_{\ang{g}}(\Z[G],M_g)$, hence the tensor-hom adjunction yields a canonical isomorphism
		\[\on{Hom}_G(\on{Ind}_{\ang{g}}^G(\Z^{-}),M_g)\simeq \on{Hom}_{\ang{g}}(\Z^-,\on{Res}_{\ang{g}}^G(M_g)).\] We obtain an induced $G$-homomorphism $f_g:\on{Ind}_{\ang{g}}^G(\Z^{-})\to M_g$. As $M_g$ is generated by $g\wedge e$ as a $G$-lattice, the homomorphism $f_g$ is surjective. It is clear that $\on{Ind}_{\ang{g}}^G(\Z^{-})$ has rank $n/2$. The orbit of $g\wedge e$ has $n$ elements, and by the above calculation it contains $e\wedge g$, hence it is closed under taking the opposite. Thus $M_g$ also has rank $n/2$, hence $M_g\simeq \on{Ind}_{\ang{g}}^G(\Z^{-})$ if $g\in S_1$.
		
		To prove (b), it is thus enough to establish a $G$-equivariant direct sum decomposition
		\begin{equation}\label{wedge-dec}
		\ext^2(\Z[G])=\oplus_{g\in S_1\cup S_2}M_g.
		\end{equation}
		As an abelian group, $\ext^2(\Z[G])$ is generated by $\set{g'\wedge g:g',g\in G, g\neq g'}$. Since $g^{-1}(g'\wedge g)=g^{-1}g'\wedge e$, we see that $\ext^2(\Z[G])$ is generated by $\set{g\wedge e: g\in G\setminus\set{e}}$ as a $G$-lattice. If $g\in S_2^{-1}$, then $g^{-1}\in S_2$ and $g^{-1}(g\wedge e)=e\wedge g^{-1}=-g^{-1}\wedge e$, hence $\ext^2(\Z[G])$ is generated by $\set{g\wedge e: g\in S_1\cup S_2}$ as a $G$-lattice. In other words, $\ext^2(\Z[G])$ is the sum of the $M_g$ for $g\in S_1\cup S_2$.
		
		We conclude by a rank computation. Note that $|S_1|+2|S_2|=n-1$. We know that $M_g$ has rank $n/2$ if $g\in S_1$, and rank $n$ if $g\in S_2$. Therefore, the right hand side of (\ref{wedge-dec}) has rank $|S_1|\cdot (n/2)+|S_2|\cdot n=n(n-1)/2$. This is equal to the rank of $\ext^2(\Z[G])$, hence (\ref{wedge-dec}) holds and the proof of (b) is complete.
		
		(c) We have a short exact sequence of $C_2$-lattices \begin{equation}\label{c2}0\to \Z\to \Z[C_2]\to \Z^{-}\to 0.\end{equation} We have $H^i(C_2,\Z[C_2])=0$ for every $i\geq 1$, hence the cohomology long exact sequence associated to (\ref{c2}) gives $H^i(C_2,\Z^-)\simeq H^{i+1}(C_2,\Z)$ for every $i\geq 1$. 
		We have a $G$-equivariant decomposition \[\ext^2(\Z[G]^{\oplus 2})=\ext^2(\Z[G])\oplus\ext^2(\Z[G])\oplus (\Z[G]\otimes \Z[G]).\] 
		The conclusion now follows from (b) and the fact that $H^i(G,\Z[G])=0$ for every $i\geq 1$.
		
		(d) The group $H^3(C_2,\Z)$ is trivial. By (c), we deduce that $H^2(G,\ext^2 (\Z[G]^{\oplus 2}))$ is also trivial. The cohomology long exact sequence associated to (\ref{n}) gives \[H^1(G,\ext^2 (N))\xrightarrow{\partial} H^2(G,N)\to H^2(G,\ext^2 (\Z[G]^{\oplus 2}))=0,\] proving the surjectivity of $\partial$.
	\end{proof}
	
	\begin{lemma}\label{square}
		There exists a homomorphism $h:H^1(G,\ext^2(N))\to H^1(G,\ext^2(N))$ making the following diagram commute:	
		\begin{equation}
		\begin{tikzcd}
		H^1(G,\ext^2(N))\arrow[r,"\partial"] \arrow[d,"h"] & H^2(G,N)\arrow[d,"\pi_2"]\\
		H^1(G,\ext^2(N)) \arrow[r,"\alpha_N"] & H^2(G,N/2).	
		\end{tikzcd}
		\end{equation}
		Here $\partial$ is the connecting homomorphism of (\ref{n}), and $\alpha_N$ is the connecting homomorphism of (\ref{exterior}) with $L=N$. 
	\end{lemma}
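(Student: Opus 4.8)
The plan is to realize the asserted square as the map induced on cohomology by a morphism of short exact sequences of $G$-lattices, from (\ref{n}) to (\ref{exterior}) with $L=N$, and then to quote the naturality of connecting homomorphisms. Concretely, I want to produce a commutative diagram of $G$-lattices
\[
\begin{tikzcd}
0\arrow[r] & N\arrow[r,"\eta"]\arrow[d,"p"] & \ext^2(\Z[G]^{\oplus2})\arrow[r,"\wedge^2\nu"]\arrow[d,"\beta"] & \ext^2 N\arrow[r]\arrow[d,"h_0"] & 0\\
0\arrow[r] & N/2\arrow[r,"\iota"] & \Gamma^2(N)\arrow[r,"\pi"] & \ext^2 N\arrow[r] & 0
\end{tikzcd}
\]
in which the top row is (\ref{n}), the bottom row is (\ref{exterior}) for $L=N$, the map $\nu\colon\Z[G]^{\oplus2}\to N$ is the surjection of (\ref{cofl0b}), and $p$ is reduction modulo $2$. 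Granting such a diagram, functoriality of the long exact cohomology sequence yields $\pi_2\circ\partial=\alpha_N\circ H^1(h_0)$, so that $h:=H^1(h_0)$ does the job.

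The construction of the middle map $\beta$ is the crux. Since $\Z[G]^{\oplus2}$ is a permutation lattice, \Cref{setup}(a) (together with \Cref{ordering}, which gives $G$-equivariance) furnishes a $G$-equivariant section $s\colon\ext^2(\Z[G]^{\oplus2})\to\Gamma^2(\Z[G]^{\oplus2})$ of the projection of (\ref{exterior}) for $L=\Z[G]^{\oplus2}$, determined on the permutation basis $x_1,\dots,x_{2n}$ of $\Z[G]^{\oplus2}$ by $x_i\wedge x_j\mapsto x_i\star x_j$ for $i\neq j$. I set $\beta:=\Gamma^2(\nu)\circ s$, where $\Gamma^2(\nu)\colon\Gamma^2(\Z[G]^{\oplus2})\to\Gamma^2(N)$ is the induced map on $\Gamma^2$. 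Once $\beta$ is in hand, $h_0$ is forced: since $\ker(\wedge^2\nu)=\on{Im}(\eta)$ and $\pi\circ\beta\circ\eta=\pi\circ\iota\circ p=0$, the composite $\pi\circ\beta$ factors through $\wedge^2\nu$, and I define $h_0$ by $h_0(\wedge^2\nu(w)):=\pi(\beta(w))$; this makes the right-hand square commute by construction.

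Everything then reduces to the left-hand square, i.e.\ to $\beta\circ\eta=\iota\circ p$, where $\eta(\cl{m})=m\wedge\sigma$ with $\sigma=(\gamma,-\gamma)$. Expanding in the permutation basis (writing $m=\sum_i m_i x_i$ and $\sigma=\sum_i\sigma_i x_i$), a short computation gives $s(m\wedge\sigma)=m\star\sigma-\sum_i m_i\sigma_i\,(x_i\star x_i)$. Applying $\Gamma^2(\nu)$ annihilates the term $m\star\sigma$, because $\nu(\sigma)=0$, and leaves $\beta(\eta(\cl{m}))=-\sum_i m_i\sigma_i\,\bigl(\nu(x_i)\star\nu(x_i)\bigr)$. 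This is exactly where the shape of $\sigma$ enters decisively: the elements $\nu(x_i)\star\nu(x_i)=\iota\bigl(p(\nu(x_i))\bigr)$ are $2$-torsion, while every coordinate $\sigma_i$ of $\sigma=(\gamma,-\gamma)$ equals $\pm1$ and is therefore odd, so $m_i\sigma_i\,\bigl(\nu(x_i)\star\nu(x_i)\bigr)=m_i\,\bigl(\nu(x_i)\star\nu(x_i)\bigr)$. Summing and using linearity of $\iota\circ p$ yields $\beta(\eta(\cl{m}))=\nu(m)\star\nu(m)=\iota(p(\cl{m}))$, as required.

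The main obstacle is precisely the construction of $\beta$. A priori there is no natural $G$-map from the exterior square $\ext^2(\Z[G]^{\oplus2})$ to the ``$\Gamma$-square'' $\Gamma^2(N)$, and the naive bilinear candidate assembled from $\nu$ fails to kill the diagonal and fails to see $\sigma$, since $\nu(\sigma)=0$. The device that resolves this is to lift through the permutation splitting $s$ and then push forward by $\Gamma^2(\nu)$; the compatibility $\beta\circ\eta=\iota\circ p$ survives only because $\sigma$ has odd coordinates and the offending diagonal terms are $2$-torsion. I expect the remaining points---the $G$-equivariance of $s$, the well-definedness of $h_0$, and the sign bookkeeping in the basis expansion---to be routine, and the conclusion to follow immediately by naturality of the connecting homomorphism.
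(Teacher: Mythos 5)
Your proposal is correct and follows essentially the same route as the paper: the map $\beta=\Gamma^2(\nu)\circ s$ is exactly the paper's $f$, the commutativity of the left-hand square is verified by the same computation (the paper checks it on the generators $\cl{(g_i,0)}$, $\cl{(0,g_i)}$ rather than on a general $m$, but the mechanism---$\nu(\sigma)=0$ kills the $m\star\sigma$ term while the $2$-torsion diagonal terms survive with odd coefficients $\pm1$---is identical), and $h$ is then induced by naturality of connecting homomorphisms. No substantive differences.
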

	
	\begin{proof}
		As in \Cref{describe-n}, if $m$ is an element of $\Z[G]^{\oplus 2}$, we denote by $\cl{m}\in N$ its image under the homomorphism $\Z[G]^{\oplus 2}\to N$ of (\ref{cofl0b}). Let $n$ be the order of $G$, let $g_1,\dots,g_n$ be the elements of $G$, and denote by $(g_1,0),\dots, (g_n,0),(0,g_1),\dots,(0,g_n)$ the canonical permutation basis of $\Z[G]^{\oplus 2}$. By \Cref{setup}(a), this choice of basis yields a section
		\begin{align*}
		s:\ext^2(\Z[G]^{\oplus 2})&\to\Gamma^2(\Z[G]^{\oplus 2})\\
		(g_i,0)\wedge (g_j,0)&\mapsto \cl{(g_i,0)}\star\cl{(g_j,0)}, \quad i<j, \\
		(g_i,0)\wedge (0,g_j)&\mapsto \cl{(g_i,0)}\star\cl{(0,g_j)},\quad i\neq j, \\
		(0,g_i)\wedge (0,g_j)&\mapsto \cl{(0,g_i)}\star\cl{(0,g_j)},\quad i<j.
		\end{align*}
		We let \[f:\ext^2(\Z[G]^{\oplus 2})\xrightarrow{s} \Gamma^2(\Z[G]^{\oplus 2})\to \Gamma^2(N),\] where the second homomorphism is induced by (\ref{cofl0b}).
		Let $\eta:N\hookrightarrow \ext^2(\Z[G]^{\oplus 2})$ be the injection of \Cref{describe-n}(a), and let $\iota: N/2\hookrightarrow \Gamma^2(N)$ be the injective homomorphism of (\ref{exterior}) with $L=N$. We claim that the square
		\begin{equation}\label{surprising-comm}
		\begin{tikzcd}
		N \arrow[r,hook,"\eta"] \arrow[d,"\pi_2"] & \ext^2(\Z[G]^{\oplus 2})\arrow[d,"f"] \\
		N/2 \arrow[r,hook, "\iota"] & \Gamma^2(N) 
		\end{tikzcd}
		\end{equation} commutes. It is enough to verify the commutativity on the $\cl{(g_i,0)}$ and the $\cl{(0,g_i)}$. By \Cref{describe-n}(a), we have \begin{align*}\eta(\cl{(g_i,0)})=&(g_i,0)\wedge(\gamma,-\gamma)\\
		=&\sum_{j\neq i}(g_i,0)\wedge(g_j,0)-\sum_{j=1}^n(g_i,0)\wedge (0,g_j).\end{align*} 
		From the definition of $s$ and \Cref{ordering}, we see that  \begin{align*}s(\eta(\cl{(g_i,0)}))=&\sum_{j\neq i}(g_i,0)\star (g_j,0)-\sum_{j=1}^n(g_i,0)\star (0,g_j)\\
		=&(g_i,0)\star(\gamma,-\gamma)-(g_i,0)\star (g_i,0)\\
		=&(g_i,0)\star(\gamma,-\gamma)+(g_i,0)\star (g_i,0).
		\end{align*}
		Since $\cl{(\gamma,-\gamma)}=0$, we have \[f(\eta(\cl{(g_i,0)}))=\cl{(g_i,0)}\star\cl{(\gamma,-\gamma)}+\cl{(g_i,0)}\star \cl{(g_i,0)}=\cl{(g_i,0)}\star \cl{(g_i,0)}=\iota(\pi_2(\cl{(g_i,0)})).\] The proof of the commutativity of (\ref{surprising-comm}) on the $\cl{(0,g_i)}$ follows by symmetry. Therefore (\ref{surprising-comm}) commutes, and so there exists $\cl{f}:\ext^2(N)\to \ext^2(N)$ making the diagram
		\begin{equation}
		\begin{tikzcd}
		0 \arrow[r] & N\arrow[r,"\eta"] \arrow[d,"\pi_2"] & \ext^2(\Z[G]^{\oplus 2}) \arrow[d,"f"]\arrow[r] & \ext^2(N) \arrow[r] \arrow[d,dashed,"\cl{f}"] & 0\\
		0 \arrow[r] & N/2 \arrow[r,"\iota"] & \Gamma^2(N) \arrow[r] & \ext^2(N)\arrow[r] & 0.	
		\end{tikzcd}
		\end{equation}
		commute.
		Here the exact sequence at the top is (\ref{n}), and the exact sequence at the bottom is (\ref{exterior}) for $L=N$. Passing to group cohomology, we obtain the conclusion (the homomorphism $h$ is induced from $\cl{f}$).
	\end{proof}
	
	\section{Reduction to group cohomology with constant coefficients}\label{sec5}
	
	Recall that if $S$ is a permutation lattice, by \Cref{setup}(a) the choice of a permutation basis $x_1,\dots,x_n$ for $S$ defines a splitting of the sequence
	\[0\to S/2\to \Gamma^2(S)\to \ext^2(S)\to 0.\] More precisely, we get a section $\ext^2(S)\to \Gamma^2(S)$ by sending $x_i\wedge x_j\mapsto x_i*x_j$ for every $i<j$, and a retraction $\Gamma^2(S)\to S/2$ by sending $x_i*x_j\mapsto 0$ if $i<j$, and $x_i*x_i\mapsto x_i+ 2S$.
	
	Applying this to $S=\Z[G\times G]$, with the canonical basis $\set{(g_i,g_j)}_{i,j}$, we obtain a homomorphism
	\[\phi_N:\Gamma^2(N)\to \Gamma^2(\Z[G\times G])\to \Z[G\times G]/2,\] where the map on the left is induced by (\ref{cofl0a}).
	
	We now fix a permutation basis $\set{x_h}$ of the $G$-lattice $P$ of (\ref{cofl-p}). Using the canonical basis  $\set{(g_i,g_j)}_{i,j}$ of $\Z[G\times G]$, this extends to a permutation basis of $\Z[G\times G]\oplus P$. We obtain a homomorphism 
	\[\phi_Q:\Gamma^2(Q)\to \Gamma^2(\Z[G\times G]\oplus P)\to (\Z[G\times G]\oplus P)/2,\] where $Q$ was defined in (\ref{cofl2}), and where the homomorphism on the left is induced by the map $(\iota_1,\iota_2)$ of (\ref{cofl2}). We stress that $\phi_N$ and $\phi_Q$ depend on the choice of a permutation basis for $\Z[G\times G]$ and $P$. It may be helpful for the reader to note that $\phi_N$ and $\phi_Q$ do not depend on the choice of ordering of the bases; see \Cref{ordering}.
	
	\begin{lemma}\label{big}
		We have a commutative diagram with exact rows	
		\begin{equation*}
		\begin{tikzcd}
		0\arrow[r] & N/2 \arrow[r] \arrow[bend right=36, ddd, equal] \arrow[d,hook] & \Gamma^2(N) \arrow[r] \arrow[bend right=36, ddd,swap, "\phi_N"] \arrow[d,hook] & \ext^2(N) \arrow[r]\arrow[d,hook] \arrow[bend right=36, ddd] & 0\\
		0\arrow[r] & Q/2 \arrow[r] \arrow[d,equal] & \Gamma^2(Q) \arrow[r] \arrow[d,"\phi_Q"] & \ext^2(Q) \arrow[r] \arrow[d,"k"] & 0\\  
		0\arrow[r] & Q/2 \arrow[r]  & (\Z[G\times G]\oplus P)/2 \arrow[r]  & M/2 \arrow[d,equal] \arrow[r] & 0\\ 
		0\arrow[r] & N/2 \arrow[r] \arrow[u,hook] & \Z[G\times G]/2 \arrow[r] \arrow[u,hook] & M/2 \arrow[r] & 0. 
		\end{tikzcd}
		\end{equation*}
		Here, the first two rows are (\ref{exterior}) for $L=N,Q$. The third and fourth rows are the reductions modulo $2$ of (\ref{cofl2}) and (\ref{cofl0a}), respectively.
	\end{lemma}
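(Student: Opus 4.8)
The plan is to prove the two assertions of the lemma separately: first that all four rows are exact, and then that every constituent region commutes, along the way defining the two still-unlabelled vertical maps ($k$ and the rightmost bend arrow $\ext^2(N)\to M/2$) as the maps induced on cokernels. For exactness, the first two rows are just (\ref{exterior}) with $L=N$ and $L=Q$, hence exact by construction. The third and fourth rows are obtained from (\ref{cofl2}) and (\ref{cofl0a}) by applying $-\otimes_{\Z}\Z/2$; since $M$ is $\Z$-free we have $\on{Tor}_1^{\Z}(M,\Z/2)=0$, so these reductions remain short exact. This disposes of exactness.

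For the top band (rows one and two), both columns of vertical maps are induced by the inclusion $N\hookrightarrow Q$ coming from the middle row $0\to N\to Q\to P\to 0$ of the diagram in \Cref{sec3}, so the squares commute by naturality of (\ref{exterior}) in $L$. Since $Q/N\simeq P$ is $\Z$-free, the inclusion $N\hookrightarrow Q$ is split over $\Z$, and therefore each of the functors $-/2$, $\Gamma^2$, $\ext^2$ carries it to an injection; this accounts for the three hooked arrows.

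For the middle band (rows two and three) the content is concentrated in the left square, which asserts that $\phi_Q(q\star q)$ is the reduction modulo $2$ of $(\iota_1,-\iota_2)(q)$ for every $q\in Q$. Here I would invoke \Cref{setup}(a): the retraction $r\colon\Gamma^2(\Z[G\times G]\oplus P)\to(\Z[G\times G]\oplus P)/2$ entering the definition of $\phi_Q$ satisfies $r\circ\iota=\on{id}$, where $\iota$ sends $v+2(\cdots)$ to $v\star v$; hence $r(v\star v)=v\bmod 2$ for every $v$, and applying this to $v=(\iota_1,\iota_2)(q)$ gives $\phi_Q(q\star q)=\overline{(\iota_1,\iota_2)(q)}=\overline{(\iota_1,-\iota_2)(q)}$, the last equality because the sign vanishes modulo $2$. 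With the left square commutative and all rows exact, $k\colon\ext^2(Q)\to M/2$ is the unique map induced on cokernels, and the right square then commutes automatically. The bottom band (rows three and four) is purely formal: from the \Cref{sec3} diagram one reads off $\iota_2|_N=0$ and $\iota_1|_N$ equal to the inclusion $N\hookrightarrow\Z[G\times G]$, so $N\hookrightarrow Q\xrightarrow{(\iota_1,-\iota_2)}\Z[G\times G]\oplus P$ is $n\mapsto(n,0)$, making the left square commute, while the right square commutes because $(\pi+\pi')(v,0)=\pi(v)$.

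It remains to treat the three long bend arrows from the first row to the last: the leftmost is the identity of $N/2$, the middle is $\phi_N$, and the rightmost I would define to be the composite $\ext^2(N)\hookrightarrow\ext^2(Q)\xrightarrow{k}M/2$, which makes the right outer region commute by fiat. The single nonformal compatibility left to check is that $\phi_Q$ restricted to the image of $\Gamma^2(N)$ equals $\phi_N$ followed by the inclusion $\Z[G\times G]/2\hookrightarrow(\Z[G\times G]\oplus P)/2$. Because $\iota_2|_N=0$, elements of $N$ land in the $\Z[G\times G]$ summand only, and because the permutation basis of $\Z[G\times G]\oplus P$ was chosen to extend that of $\Z[G\times G]$, the retraction $r$ restricts on $\Gamma^2(\Z[G\times G])$ to the retraction defining $\phi_N$. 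This last point — the compatibility of two sections defined through different permutation bases — is the main obstacle, though it is bookkeeping-level once the bases are chosen compatibly; everything else follows from exactness and routine diagram chasing.
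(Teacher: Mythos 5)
Your proposal is correct and follows essentially the same route as the paper: verify the left-hand and middle squares directly (the key ones via the retraction--section identity $r\circ\iota=\on{id}$ from \Cref{setup}(a) and the compatibility of the chosen permutation bases with the decomposition $\Z[G\times G]\oplus P$), then obtain $k$ and the remaining right-hand commutativity from the universal property of cokernels. Your added remarks on exactness of the reduced rows and injectivity of the hooked arrows are correct and only make explicit what the paper leaves implicit.
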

	
	\begin{proof}
		By the definition of $\phi_N$ and $\phi_Q$, we have a commutative diagram
		\begin{equation*}
		\begin{tikzcd}
		\Gamma^2(N)\arrow[r] \arrow[d,hook] & \Gamma^2(\Z[G\times G]) \arrow[r] \arrow[d,hook]  & \Z[G\times G]/2 \arrow[d,hook]\\
		\Gamma^2(Q)\arrow[r] & \Gamma^2(\Z[G\times G]\oplus P) \arrow[r] & (\Z[G\times G]\oplus P)/2,
		\end{tikzcd}
		\end{equation*}
		where the composition of the top horizontal homomorphisms is $\phi_N$, and the composition of the bottom horizontal homomorphisms is $\phi_Q$. The commutativity of the square on the left follows from the functoriality of $\Gamma^2(-)$. The square on the right commutes because the choice of permutation basis that we have made respects the decomposition $\Z[G\times G]\oplus P$. We deduce that the diagram
		\begin{equation*}
		\begin{tikzcd}
		\Gamma^2(N)\arrow[r,"\phi_N"] \arrow[d,hook] &  \Z[G\times G]/2 \arrow[d,hook]\\
		\Gamma^2(Q)\arrow[r,"\phi_Q"] & (\Z[G\times G]\oplus P)/2
		\end{tikzcd}
		\end{equation*}
		is commutative. We now show that the four squares on the left side of the diagram commute. The commutativity of
		\begin{equation*}
		\begin{tikzcd}
		N/2 \arrow[r,hook] \arrow[d,hook] & \Gamma^2(N) \arrow[d,hook] \\
		Q/2 \arrow[r,hook] & \Gamma^2(Q)  
		\end{tikzcd}	\qquad\qquad
		\begin{tikzcd}
		N/2 \arrow[r,hook] \arrow[d,hook] &   \Z[G\times G]/2  \arrow[d,hook] \\
		Q/2 \arrow[r,hook] & (\Z[G\times G]\oplus P)/2
		\end{tikzcd} 
		\end{equation*}
		is clear. 
		We have a commutative diagram
		\[
		\begin{tikzcd}
		Q/2 \arrow[r,hook] \arrow[d,hook] & \Gamma^2(Q)\arrow[d,hook]  \arrow[dr,"\phi_Q"]  \\
		(\Z[G\times G]\oplus P)/2 \arrow[r,hook] & \Gamma^2(\Z[G\times G]\oplus P)\arrow[r] & (\Z[G\times G]\oplus P)/2,
		\end{tikzcd}
		\]
		where the square comes from the functoriality of (\ref{exterior}), and the triangle from the definition of $\phi_Q$. By construction, the composition of the horizontal homomorphisms at the bottom is the identity.
		Therefore, the square
		\begin{equation*}
		\begin{tikzcd}
		Q/2 \arrow[r,hook] \arrow[d,equal] & \Gamma^2(Q) \arrow[d,"\phi_Q"] \\
		Q/2 \arrow[r,hook] & (\Z[G\times G]\oplus P)/2   
		\end{tikzcd} 
		\end{equation*}
		appearing in the diagram of the lemma commutes. A similar reasoning yields the commutativity of
		\begin{equation*}
		\begin{tikzcd}
		N/2 \arrow[r,hook] \arrow[d,equal] & \Gamma^2(N) \arrow[d,"\phi_N"] \\
		N/2 \arrow[r,hook] & \Z[G\times G]/2.   
		\end{tikzcd} 
		\end{equation*}
		The existence and commutativity of the right side of the diagram of the lemma now follow from the universal property of cokernels.
	\end{proof}

	For every $i\geq 0$, let $\pi_2:H^i(G,\Z)\to H^i(G,\Z/2)$ be the homomorphism of reduction modulo $2$, and let $\on{Sq}^1:H^i(G,\Z/2)\to H^{i+1}(G,\Z/2)$ be the first Steenrod square, that is, the Bockstein homomorphism for the sequence $0\to \Z/2\to \Z/4\to \Z/2\to 0$. We also denote by $\beta:H^i(G,\Z/2)\to H^{i+1}(G,\Z)$ the Bockstein homomorphism for $0\to \Z\to \Z\to \Z/2\to 0$. It is well known and easy to show that $\on{Sq}^1=\pi_2\circ\beta$.
	
	For every subgroup $H$, let $\on{Cor}_H^G:H^*(H,\Z/2)\to H^*(G,\Z/2)$ denote the corestriction homomorphism, and let $\tau_H$ be the composition
	\begin{align}\label{cup}\tau_H:H^1(H,\Z/2)\otimes H^2(H,\Z)\xrightarrow{\on{Id}\otimes \pi_2} H^1(H,\Z/2)\otimes H^2(H,\Z/2)\xrightarrow{\cup} \\ \xrightarrow{\cup} H^3(H,\Z/2)\xrightarrow{\on{Cor}_H^G} H^3(G,\Z/2)\nonumber.\end{align}
	
	\begin{defin}\label{defin-vg}
		Let $G$ be a finite group. We define $V_G$ as the subgroup of $H^3(G,\Z/2)$ generated by the union of the $\on{Im}\tau_H$, where $H$ varies among all subgroups of $G$.
	\end{defin}
	
	\begin{prop}\label{condition}
		\begin{enumerate}[label=(\alph*)]
			\item Assume that $\on{Im}(\pi_2:H^3(G,\Z)\to H^3(G,\Z/2))$ is not contained in $V_G$. Then $\Phi(G,M)\neq 0$.
			\item Assume that $\on{Im}(\on{Sq}^1:H^2(G,\Z/2)\to H^3(G,\Z/2))$ is not contained in $V_G$. Then $\Phi(G,M)\neq 0$.
		\end{enumerate}
	\end{prop}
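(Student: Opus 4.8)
\emph{Reduction of (b) to (a).} Since $\on{Sq}^1=\pi_2\circ\beta$, we have $\on{Im}(\on{Sq}^1)\subseteq\on{Im}(\pi_2\colon H^3(G,\Z)\to H^3(G,\Z/2))$, so the hypothesis of (b) implies that of (a). Hence I will only prove (a).

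\emph{Reduction to a nonvanishing statement about $\alpha_N$.} Recall $\Phi(G,M)=\on{Im}(\alpha_Q)$. The plan is to exploit \Cref{big}: by naturality of the connecting homomorphism applied to its top two rows, $j\circ\alpha_N=\alpha_Q\circ(\ext^2 N\hookrightarrow\ext^2 Q)_*$, where $j\colon H^2(G,N/2)\to H^2(G,Q/2)$ is induced by $N/2\hookrightarrow Q/2$. Thus $\on{Im}(j\circ\alpha_N)\subseteq\on{Im}(\alpha_Q)$, and it suffices to show $j\circ\alpha_N\neq 0$.

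\emph{Homological reductions.} Since $\Z[G\times G]$ is a free $\Z[G]$-module, $H^{\geq 1}(G,\Z[G\times G]/2)=0$, so the fourth row of \Cref{big} shows that the connecting map $\delta_4\colon H^1(G,M/2)\xrightarrow{\sim}H^2(G,N/2)$ is an isomorphism. Naturality with the third row gives $j\circ\delta_4=\delta_3$ for the connecting map $\delta_3$ of the third row; since $H^1(G,\Z[G\times G]/2)=0$ we get $\on{Ker}(\delta_3)=\on{Im}(\pi'_*)$, whence $\on{Ker}(j)=\delta_4(\on{Im}(\pi'_*))$, where $\pi'_*\colon H^1(G,P/2)\to H^1(G,M/2)$ is induced by the coflasque resolution (\ref{cofl-p}). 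Next, \Cref{square} combined with \Cref{describe-n}(d) gives $\on{Im}(\alpha_N)\supseteq\pi_2(\on{Im}(\partial))=\pi_2(H^2(G,N))$. Finally, the vanishing of $H^{\geq 1}$ of $\Z[G\times G]$ and of $\Z[G]^{\oplus 2}$, and of their reductions modulo $2$, yields canonical isomorphisms $H^1(G,M)\cong H^2(G,N)\cong H^3(G,\Z)$ and $H^1(G,M/2)\cong H^2(G,N/2)\cong H^3(G,\Z/2)$, compatibly with $\pi_2$. Since $\delta_4$ is injective and $\pi_2(H^2(G,N))=\delta_4(\pi_2(H^1(G,M)))$, the inequality $j\circ\alpha_N\neq 0$ reduces to showing that $\on{Im}(\pi_2\colon H^3(G,\Z)\to H^3(G,\Z/2))$ is not contained in the image of $\on{Im}(\pi'_*)$ under the isomorphism $H^1(G,M/2)\cong H^3(G,\Z/2)$.

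\emph{Main step and the obstacle.} Given the hypothesis of (a), the previous paragraph shows it is enough to prove that $\on{Im}(\pi'_*)$, transported to $H^3(G,\Z/2)$, lies in $V_G$. I would argue as follows. Write the permutation lattice $P=\bigoplus_j\Z[G/H_j]$; by Shapiro's lemma $H^1(G,P/2)=\bigoplus_j H^1(H_j,\Z/2)$, and the $j$-th component of $\pi'_*$ is $x\mapsto\on{Cor}_{H_j}^G((m_j)_*x)$, where $m_j\in M^{H_j}$ is the image under $\pi'$ of the coset $eH_j$ and $(m_j)_*\colon H^1(H_j,\Z/2)\to H^1(H_j,M/2)$ is induced by the coefficient map $\Z/2\to M/2$, $1\mapsto\cl{m_j}$. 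Because the defining sequences of $M$ restrict to free $\Z[H_j]$-modules, the same isomorphisms hold over $H_j$, giving $\Theta_{H_j}\colon H^1(H_j,M/2)\cong H^3(H_j,\Z/2)$ that are compatible with corestriction. Using that cup products commute with connecting homomorphisms up to sign, one computes $\Theta_{H_j}((m_j)_*x)=\pm\,x\cup\pi_2(c_j)$, where $c_j\in H^2(H_j,\Z)$ is the image of $m_j$ under the composite of integral connecting maps $H^0(H_j,M)\to H^1(H_j,N)\to H^2(H_j,\Z)$. Then the transported $j$-th component equals $\on{Cor}_{H_j}^G(\pm x\cup\pi_2(c_j))=\pm\tau_{H_j}(x\otimes c_j)\in\on{Im}(\tau_{H_j})\subseteq V_G$, as desired. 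The main obstacle is precisely this cup-product identity: proving it requires carefully matching Shapiro's lemma, the connecting isomorphisms $\Theta_{H_j}$, reduction modulo $2$, and the projection formula for corestriction, while keeping track of signs.
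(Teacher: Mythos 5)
Your argument is correct and, at its core, follows the same strategy as the paper: reduce via the diagram of \Cref{big} to the nonvanishing of $\alpha_N$ composed with $H^2(G,N/2)\to H^2(G,Q/2)$, use \Cref{square} and \Cref{describe-n}(d) to see that $\on{Im}(\alpha_N)$ contains $\pi_2(H^2(G,N))$, identify everything with $H^3(G,\Z/2)$ via the dimension-shifting isomorphisms, and show that the obstruction subgroup lands in $V_G$. The one place where you genuinely diverge is the last step: the paper identifies your $\on{Ker}(j)=\delta_1(\on{Im}\pi'_*)$ with the subgroup $H^1(G,M/2)^{(1)}$ generated by the maps $\sigma_H\colon H^1(H,\Z/2)\otimes M^H\to H^1(G,M/2)$ by citing Merkurjev's Lemma~3.1, whereas you re-derive the needed half of that statement directly, by decomposing $P=\bigoplus_j\Z[G/H_j]$ and applying Shapiro's lemma to see that each component of $\pi'_*$ is $x\mapsto\on{Cor}_{H_j}^G((m_j)_*x)$. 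This makes your proof more self-contained (and you correctly observe that only the containment $\on{Im}(\pi'_*)\subseteq\sum_H\on{Im}\sigma_H$ is needed, not the equality that Merkurjev's lemma provides), at the cost of having to verify the Shapiro bookkeeping yourself. The ``obstacle'' you flag at the end --- the compatibility of cup products with the connecting isomorphisms $\Theta_{H_j}=\delta_2\circ\delta_1$ and with corestriction --- is real but standard; the paper disposes of it by a double application of \cite[Proposition 3.4.8]{gille2017central} together with the fact that connecting homomorphisms commute with corestriction, and the signs are irrelevant since the target $H^3(H_j,\Z/2)$ is $2$-torsion. So there is no gap, only a citation you would need to supply to close that step.
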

	
	\begin{proof}
		Following the notation of Merkurjev \cite[\S 3]{merkurjev2019pairing}, using (\ref{cofl2}) we define \begin{align*}H^1(G,M/2)^{(1)}:=&\on{Im}(H^1(G,(\Z[G\times G]\oplus P)/2)\to H^1(G,M/2))\\ =&\on{Ker}(H^1(G,M/2)\to H^2(G,Q/2)).\end{align*} 
		By \cite[Lemma 3.1]{merkurjev2019pairing}, $H^1(G,M/2)^{(1)}$ is generated by the images of the compositions
		\[\sigma_H:H^1(H,\Z/2)\otimes M^H\xrightarrow{\cup} H^1(H,M/2)\xrightarrow{\on{Cor}_H^G} H^1(G,M/2),\] where $H$ ranges among all subgroups of $G$. 
		For every subgroup $H$ of $G$ and every $i\geq 0$, we denote by \begin{align*}d_1:H^i(H,M)\to H^{i+1}(H,N),& \qquad d_2:H^i(H,N)\to H^{i+1}(H,\Z),\\ \delta_1:H^i(H,M/2)\to H^{i+1}(H,N/2),& \qquad \delta_2:H^i(H,N/2)\to H^{i+1}(H,\Z/2)\end{align*}
		the connecting homomorphisms associated to (\ref{cofl0a}), (\ref{cofl0b}) and their reduction modulo $2$. Recall that free $\Z[G]$-modules and free $(\Z/2)[G]$-modules have trivial cohomology in positive degrees. It follows that $d_1,d_2,\delta_1,\delta_2$ are surjective in all non-negative degrees, and isomorphisms in positive degrees. 
		
		Let $H$ be a subgroup of $G$. By a double application of \cite[Proposition 3.4.8]{gille2017central}, first to (\ref{cofl0a}) and then to (\ref{cofl0b}), the diagram
		\begin{equation*}
		\begin{tikzcd}
		H^1(H,\Z/2)\otimes M^H \arrow[r,"\cup"] \arrow[d,->>,"\on{Id}\otimes (d_2\circ d_1)"] & H^1(H,M/2) \arrow[d,"\wr\: \delta_2\circ\delta_1"]\\
		H^1(H,\Z/2)\otimes H^2(H,\Z) \arrow[r,"\cup"] & H^3(H,\Z/2)	
		\end{tikzcd}
		\end{equation*}
		commutes. Here, the horizontal homomorphism at the bottom is (\ref{cup}). Since connecting homomorphisms commute with corestrictions, for every subgroup $H$ of $G$ we obtain the following commutative square:
		\begin{equation*}
		\begin{tikzcd}
		H^1(H,\Z/2)\otimes M^H \arrow[r,"\sigma_H"] \arrow[d,->>,"\on{Id}\otimes (d_2\circ d_1)"] & H^1(G,M/2) \arrow[d,"\wr\: \delta_2\circ\delta_1"]\\
		H^1(H,\Z/2)\otimes H^2(H,\Z) \arrow[r,"\tau_H"] & H^3(G,\Z/2).	
		\end{tikzcd}
		\end{equation*}
		We conclude that
		\begin{equation}\label{vg}
		(\delta_2\circ\delta_1)(H^1(G,M/2)^{(1)})=V_G.
		\end{equation} 	
		
		(a) Consider the commutative diagram
		\begin{equation*}
		\begin{tikzcd}
		& H^1(G,\ext^2(N)) \arrow[d,->>,"\partial"] \arrow[dd,bend right=54,near end, swap,"\alpha_N\circ h"] \\
		H^1(G,M) \arrow[r,"d_1"] \arrow[d,"\pi_2"] & H^2(G,N) \arrow[r,"d_2"] \arrow[d,"\pi_2"] & H^3(G,\Z) \arrow[d,"\pi_2"]\\
		H^1(G,M/2) \arrow[r,"\delta_1"] & H^2(G,N/2)\arrow[r,"\delta_2"] & H^3(G,\Z/2)	
		\end{tikzcd}
		\end{equation*}
		where the triangle in the middle comes from \Cref{square}, the horizontal maps are isomorphisms and $\partial$ is the surjective homomorphism of \Cref{describe-n}(d). 
		We claim that there exists $x\in H^1(G,\ext^2(N))$ such that 
		\begin{equation}\label{exists}
		\delta_1^{-1}(\alpha_N(x))\in H^1(G,M/2) \setminus H^1(G,M/2)^{(1)}.
		\end{equation}
		By assumption, there exists $a\in H^3(G,\Z)$ such that $\pi_2(a)\in H^3(G,\Z/2)\setminus V_G$. We let $b:=(d_2\circ d_1)^{-1}(a)\in H^1(G,M)$. By (\ref{vg}), $\pi_2(b)\in H^1(G,M/2)\setminus H^1(G,M/2)^{(1)}$. Since $\partial$ is surjective, there exists $c\in H^1(G,\ext^2(N))$ such that $\partial(c)=d_1(b)$. If we let $x:=h(c)\in H^1(G,\ext^2(N))$, then \[\delta_1^{-1}(\alpha_N(x))=\delta_1^{-1}(\pi_2(\partial(c)))=\pi_2(d_1^{-1}(\partial(c)))=\pi_2(b),\] hence $x$ satisfies (\ref{exists}). 
		
		Passing to group cohomology in \Cref{big}, we obtain a commutative diagram
		\begin{equation*}
		\begin{tikzcd}
		H^1(G,\ext^2(N)) \arrow[r] \arrow[d,"\alpha_N"] \arrow[rrr,bend left=9]  & H^1(G,\ext^2(Q)) \arrow[r,"k^*"] \arrow[d,"\alpha_Q"] & H^1(G,M/2) \arrow[r,equal] \arrow[d] & H^1(G,M/2) \arrow[d,"\wr\: \delta_1"]\\
		H^2(G,N/2) \arrow[r] \arrow[rrr,equal,bend right=9] & H^2(G,Q/2) \arrow[r,equal] & H^2(G,Q/2) & \arrow[l] H^2(G,N/2).	
		\end{tikzcd}
		\end{equation*}
		Let now $x\in H^1(G,\ext^2(N))$ be such that (\ref{exists}) holds for $x$, and let $y\in H^1(G,\ext^2(Q))$ be the image of $x$. From the commutativity of the outer square of the diagram above, we see that the composition of the homomorphisms of the top row is $\delta_1^{-1}\circ \alpha_N$. Therefore \[k^*(y)=\delta_1^{-1}(\alpha_N(x))\in H^1(G,M/2)\setminus H^1(G,M/2)^{(1)}.\] By definition of $H^1(G,M/2)^{(1)}$, this means that $k^*(y)$ does not map to zero in $H^2(G,Q/2)$. It follows from the commutativity of the middle square of the diagram that $\alpha_Q(y)$ is a non-zero element of $H^2(G,Q/2)$. This shows that $\on{Im}\alpha_Q\neq 0$, that is, $\Phi(G,M)\neq 0$.
		
		(b) Since $\on{Sq}^1=\pi_2\circ\beta$, this follows immediately from (a).	
	\end{proof}
	
	\begin{rmk}
		The hypotheses of (a) and (b) are equivalent if $H^3(G,\Z)$ is $2$-torsion. In general we only have that the hypothesis of (b) implies that of (a). However, the hypothesis of (b) is easier to check: if $H^3(G,\Z)$ has exponent $2^m$, to check the hypothesis of (a) one needs to know the degree $3$ differentials of the first $m$ pages of the Bockstein spectral sequence associated to the short exact sequence of $G$-modules $0\to \Z\to \Z\to \Z/2\to 0$. 
		
		If $G$ is abelian or has order $\leq 32$, we have checked that the hypotheses of (a) and (b) are not satisfied.
	\end{rmk}
	
	\section{Proof of Theorem 1.2}\label{sec6}
	
	Let $G$ be a $2$-Sylow subgroup of the Suzuki group $\on{Sz}(8)$. We start by giving an explicit description of $G$. Let $\F_8$ be the field of $8$ elements, let $\F_8^+$ be its underlying additive group, and let $\theta:\F_8\to \F_8$ be the field automorphism given by $\theta(a)=a^4$, so that $\theta(\theta(a))=a^2$. For a pair of elements $a,b\in \F_8$, let $S(a,b)$ be the following lower-triangular matrix with entries in $\F_8$:
	\[S(a,b):=
	\begin{pmatrix}
	1 & 0 & 0 & 0 \\
	a& 1 & 0 & 0 \\
	b & \theta(a) & 1 & 0 \\
	a^2\theta(a)+ab+\theta(b) & a\theta(a)+b & a & 1\\
	\end{pmatrix}
	\]
	Then $G$ can be identified with the set of matrices $S(a,b)$; see \cite[Chapter XI, \S 3]{huppert2012finite}. A matrix calculation shows that \[S(a,b)S(c,d)=S(a+c,\theta(a)c+b+d).\]
	It follows that we have a short exact sequence
	\[1\to Z\to G\xrightarrow{\pi}\cl{G}\to 1,\] where $\cl{G}:=\F_8^+\simeq (\Z/2)^3$, $\pi(S(a,b)):=a$, and $Z\simeq (\Z/2)^3$ is the center of $G$. The subgroup $Z$ coincides with the derived subgroup of $G$, and so $\cl{G}$ is the abelianization of $G$; see \cite[Chapter 11, Lemma 3.1]{huppert2012finite}.
	
	If $R$ is a ring, $A^*=\oplus_{n\geq 0} A^n$ is a graded $R$-algebra, and $i$ is a non-negative integer, we denote by $A^{\leq i}$ the quotient of $A^*$ by the ideal $\oplus_{n>i} A^n$.
	The ring $H^*(G,\Z/2)$ is extremely complicated. It may be found, together with restriction and corestriction homomorphisms, in the book \cite[\#153(64) p. 566]{carlson2003cohomology}. It can also be obtained using a computer algebra software such as Magma or GAP, where $G$ is SmallGroup(64,82). For our purposes, we are only interested in degrees $\leq 3$. We have:
	\[H^{\leq 3}(G,\Z/2)\simeq (\Z/2)[z_1,y_1,x_1,w_2,v_2,u_3,t_3,s_3,r_3,q_3,p_3]^{\leq 3}/I.\] Here the indices denote the degrees of the variables, and will be dropped in the future. The ideal $I$ is defined by \[I:=(z^2 + yx, zy + zx + x^2, zx + y^2 + yx, zx^2, yx^2, x^3, zw+xv, zv+yw, yv+xw+xv).\] 
	
	\begin{lemma}\label{sqw}
	Let $V_G$ be as in \Cref{defin-vg}. The class $\on{Sq}^1(w)$ does not belong to $V_G$.
	\end{lemma}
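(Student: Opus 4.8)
The plan is to reduce \Cref{sqw} to a finite piece of $\F_2$-linear algebra inside $H^3(G,\Z/2)$: first pin down $\on{Sq}^1(w)$ in the given basis, then cut the a priori infinite family of generators of $V_G$ down to a short explicit list, and finally separate the two by a single linear functional. Throughout I would use the explicit presentation of $H^{\leq 3}(G,\Z/2)$ and the restriction/corestriction data from \cite{carlson2003cohomology} (or from Magma/GAP, with $G=\mathrm{SmallGroup}(64,82)$).

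The first and most important step is to simplify $V_G$. By \Cref{defin-vg} every generator of $V_G$ has the form $\on{Cor}_H^G(a\cup c)$ with $a\in H^1(H,\Z/2)$ and $c\in\on{Im}(\pi_2\colon H^2(H,\Z)\to H^2(H,\Z/2))$. I would first record two facts about $H^*(G,\Z/2)$. First, every triple product of degree-$1$ classes vanishes: using $zx^2,yx^2,x^3\in I$ together with the degree-$2$ relations $z^2+yx,\ zy+zx+x^2,\ zx+y^2+yx$, each of the ten monomials of degree $3$ in $z,y,x$ reduces to $0$. Second, since $G^{\mathrm{ab}}\cong(\Z/2)^3$ we have $H^2(G,\Z)\cong(\Z/2)^3$, so $\pi_2$ is injective and, because $\on{Sq}^1=\pi_2\circ\beta$ forces $\on{Im}(\on{Sq}^1\colon H^1\to H^2)\subseteq\on{Im}(\pi_2^G)$ with both sides of dimension $3$, one gets $\on{Im}(\pi_2^G)=\langle zx,yx,x^2\rangle$; in particular $w,v\notin\on{Im}(\pi_2^G)$. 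Combining these with the projection formula $\on{Cor}_H^G(\on{Res}^G_H(a')\cup c)=a'\cup\on{Cor}_H^G(c)$, and noting $\on{Cor}_H^G(c)=\pi_2(\on{Cor}_H^G b)\in\on{Im}(\pi_2^G)=\langle zx,yx,x^2\rangle$, shows that any generator of $V_G$ whose character $a$ lies in $\on{Res}^G_H(H^1(G,\Z/2))$ is a triple product of degree-$1$ classes, hence $0$. Therefore $V_G$ is generated by the $\on{Cor}_H^G(a\cup c)$ with $a$ running over a complement of $\on{Res}^G_H(H^1(G,\Z/2))$ in $H^1(H,\Z/2)$. Since such ``new'' characters are detected on $H\cap Z$ (because $Z=[G,G]$ is in the kernel of every character of $G$), only subgroups meeting the centre contribute, and by conjugation-invariance of $V_G$ it suffices to run over conjugacy-class representatives.

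With this reduction in place I would compute $\on{Sq}^1(w)$ from the Steenrod action on $H^*(G,\Z/2)$, expressing it in terms of the degree-$3$ generators $u,t,s,r,q,p$ and the products $xw,xv,yw$ (recalling that all triple products of $z,y,x$ are $0$). For each relevant subgroup $H$ I would then compute $H^1(H,\Z/2)$, the space $\on{Im}(\pi_2^H)=\ker(\beta_H)$ of degree-$2$ classes lifting to $\Z$, the cup products $a\cup c$, and the corestriction $\on{Cor}_H^G$ into $H^3(G,\Z/2)$, assembling a finite spanning set of $V_G$. The proof then concludes by exhibiting a functional $\lambda\colon H^3(G,\Z/2)\to\Z/2$ that vanishes on this spanning set but satisfies $\lambda(\on{Sq}^1 w)=1$. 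I expect the main obstacle to be precisely the corestriction step: the generators of $V_G$ are indexed by infinitely many subgroups, and the reductions above (vanishing of triple products, $w,v\notin\on{Im}\pi_2^G$, the projection formula, and the centre/conjugacy reductions) are exactly what is needed to replace this by a finite explicitly computable list; the remaining labour is tabulating the transfer maps $\on{Cor}_H^G$, for which I would rely on the tables of \cite{carlson2003cohomology} or on computer algebra.
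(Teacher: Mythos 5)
Your preliminary reductions are correct, and one of them coincides with a step of the paper's argument: every triple product of degree-one classes vanishes in $H^3(G,\Z/2)$, the image of $\pi_2\colon H^2(G,\Z)\to H^2(G,\Z/2)$ is $\langle z^2,y^2,x^2\rangle=\langle zx,yx,x^2\rangle$, and the projection formula then kills every generator $\on{Cor}_H^G(a\cup c)$ of $V_G$ whose character $a$ extends to $G$ (your extendability criterion via $H\cap Z$ is also fine here, because the Frattini subgroup of $G$ equals $Z=[G,G]$). The genuine gap is at the step you yourself flag as the main obstacle. After these reductions you are still left with the classes $\on{Cor}_H^G(a\cup c)$ for the non-extendable characters $a$ of the many subgroups $H$ meeting $Z$ --- and these are exactly the potentially dangerous generators, so your reduction has not removed the hard work. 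Your plan for them, ``tabulate the transfer maps $\on{Cor}_H^G$,'' is not supported by the cited source: \cite{carlson2003cohomology} records only the ideal generated by the images of transfer from the \emph{maximal} subgroups, not the corestriction maps from arbitrary subgroups. As written, the containment of $V_G$ in the kernel of your separating functional is deferred to an open-ended computation, and a second, smaller gap is that $\on{Sq}^1(w)$ cannot be ``computed from the Steenrod action on $H^*(G,\Z/2)$'' using only the quoted presentation, which does not include the Bockstein on the degree-two generators.

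The paper closes the first gap with one observation you are missing: by transitivity, $\on{Cor}_K^G=\on{Cor}_H^G\circ\on{Cor}_K^H$, so the image of corestriction from \emph{any} proper subgroup lies inside the image of corestriction from some maximal subgroup; the ideal generated by the latter images is precisely the tabulated ImTrans data, and in degrees $\leq 3$ it equals $(zx,yx,x^2)$, whose degree-three part consists of triple products of degree-one classes. Combined with your observation for $H=G$ itself, this places all of $V_G$ inside $W_G:=\on{Im}\bigl(H^1(G,\Z/2)^{\otimes 3}\to H^3(G,\Z/2)\bigr)$ with no subgroup enumeration whatsoever. For the second gap, the paper restricts to a subgroup $G_2\simeq \Z/4\times\Z/2\times\Z/2$ of index $4$: there $w$ restricts to $z''x''$, naturality of $\on{Sq}^1$ gives $\on{Res}_{G_2}^G(\on{Sq}^1 w)=\on{Sq}^1(z''x'')=z''(x'')^2\neq 0$, while $\on{Res}_{G_2}^G(W_G)=0$. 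That restriction is exactly the separating functional your final step calls for; I recommend adopting both devices, after which your outline becomes a complete proof.
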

	
	\begin{proof}
		Let \[W_G:=\on{Im}(H^1(G,\Z/2)^{\otimes 3}\xrightarrow{-\cup-\cup-} H^3(G,\Z/2))\] be the image of the triple cup product map, and let $V_G$ be as in \Cref{defin-vg}. We claim that $V_G\c W_G$. Since $H^1(G,\Z)=0$ and \[H^2(G,\Z)\simeq H^1(G,\Q/\Z)\simeq \on{Hom}(G,\Q/\Z)\simeq \on{Hom}(\cl{G},\Q/\Z)\simeq (\Z/2)^3\] is $2$-torsion, the Bockstein homomorphism $\beta:H^1(G,\Z/2)\to H^2(G,\Z)$ is an isomorphism. It follows that $H^2(G,\Z)$ is generated by $\beta(z),\beta(y),\beta(x)$, and so the image of $\pi_2:H^2(G,\Z)\to H^2(G,\Z/2)$ is generated by $z^2,y^2,x^2$ (recall that $\on{Sq}^1=\pi_2\circ\beta$, and that $\on{Sq}^1(c)=c^2$ when $c$ has degree $1$). It follows that $\on{Im}(\tau_G)\c W_G$, where $\tau_G$ is from (\ref{cup}).
		
		If $H$ is a subgroup of $G$ and $K$ is a subgroup of $H$, then $\on{Cor}_K^G=\on{Cor}_H^G\circ \on{Cor}_K^H$, hence to prove that $V_G\c W_G$ it is enough to show that $\on{Im}\on{Cor}_H^G\c W_G$ for every maximal proper subgroup $H$ of $G$. The ideal of $H^*(G,\Z/2)$ generated by the images of the $\on{Cor}_H^G$, where $H$ ranges over all maximal proper subgroups $H$ of $G$, can be read from \cite[\#153(64) ImTrans, p.569]{carlson2003cohomology}. In degree $\leq 3$, it coincides with the ideal generated by $zx,yx,x^2$. Therefore, in degree $3$ we have
		\[\on{Im}\on{Cor}_H^G\c \ang{zxa,yxb,x^2c: a,b,c\in H^1(G,\Z/2)}\c W_G,\] hence $V_G\c W_G$, as claimed.
		
		To finish the proof, it suffices to show that $\on{Sq}^1(w)$ does not belong to $W_G$. Let $G_1$ be the first maximal subgroup of $G$ in the list of \cite[p. 569]{carlson2003cohomology} (that is, \#1 in MaxRes). Its cohomology ring can be found in \cite[\#18(32), p. 367]{carlson2003cohomology}:
		\[H^*(G_1,\Z/2)\simeq(\Z/2)[z'_1,y'_1,x'_2,w'_2,v'_2,u'_2,t'_2]/I',\] for some ideal $I'$, a precise description of which we will not need. By \cite[MaxRes, p. 569]{carlson2003cohomology}, the restriction $\on{Res}_{G_1}^G$ sends \[z\mapsto 0,\quad y\mapsto y',\quad x\mapsto z',\quad w\mapsto v'.\] Let $G_2$ be the first maximal subgroup of $G_1$ in the list of \cite[MaxRes, p. 368]{carlson2003cohomology} (we have $G_2\simeq \Z/4\times \Z/2\times \Z/2$). The cohomology ring of $G_2$ can be read from \cite[\#2(16), p. 349]{carlson2003cohomology}:
		\[H^*(G_2,\Z/2)\simeq (\Z/2)[z''_1,y''_1,x''_1,w''_2]/((z''_1)^2).\] 
		By \cite[MaxRes \#1, p. 368]{carlson2003cohomology}, $\on{Res}^{G_1}_{G_2}$ sends \[z'\mapsto z'',\quad y'\mapsto 0,\quad v'\mapsto z''x''.\] 
		It follows that $\on{Res}^{G}_{G_2}=\on{Res}_{G_2}^{G_1}\circ \on{Res}_{G_1}^{G}$ sends \[z\mapsto 0,\quad y\mapsto 0,\quad x\mapsto z'',\quad w\mapsto z''x''.\] Let $\alpha:=z^ly^mx^n\in H^3(G,\Z/2)$, where $l,m,n\geq 0$ and $l+m+n=3$. If $l\geq 1$ or $m\geq 1$, then clearly $\on{Res}_{G_2}^G(\alpha)=0$. If $l=m=0$, then $\alpha=x^3$, and so $\on{Res}_{G_2}^G(\alpha)=(z'')^3=0$. It follows that $\on{Res}_{G_2}^G(W_G)=\set{0}$. On the other hand, since Steenrod operations commute with restriction homomorphisms, we have
		\[\on{Res}_{G_2}^G(\on{Sq}^1(w))=\on{Sq}^1(z''x'')=(z'')^2x''+z''(x'')^2=z''(x'')^2\neq 0.\] This shows that $\on{Res}_{G_2}^G(\on{Sq}^1(w))$ does not belong to $\on{Res}_{G_2}^G(W_G)=\set{0}$. We conclude that $\on{Sq}^1(w)$ does not belong to $W_G$, as desired. 
	\end{proof}
	
	\begin{proof}[Proof of \Cref{mainthm}]
		Let $G$ be a $2$-Sylow subgroup of the Suzuki group $\on{Sz}(8)$. By \Cref{sqw}, the image of $\on{Sq}^1:H^2(G,\Z/2)\to H^3(G,\Z/2)$ is not contained in $V_G$. Let $M$ be the $G$-lattice of (\ref{cofl0}). Then by \Cref{condition}(b) we have $\Phi(G,M)\neq 0$. Let $V$ be a faithful representation of $G$ over $\Q$, and set $E:=\Q(V)$ and $F:=\Q(V)^G$. The extension $E/F$ is Galois, with Galois group $G$. Let $T$ be an $F$-torus split by $E$ and whose character lattice $\hat{T}$ is isomorphic to $M$. By \Cref{merk-thm}(b), we have $\on{CH}^2(BT)_{\on{tors}}\neq 0$.
	\end{proof}
	
	\section*{Acknowledgments}
	I am thankful to Jean-Louis Colliot-Th\'el\`ene for helpful comments, and for hosting me at Universit\'e Paris-Sud (Orsay) in the Fall 2019, where this work was conducted. I am grateful to Mathieu Florence for a number of helpful conversations on the topic of this article. I thank my advisor Zinovy Reichstein and Ben Williams for comments on the exposition, and the anonymous referees for carefully reading my manuscript and providing useful suggestions.

\end{document}